\numberwithin{theorem}{section}
\newcommand{\TheTitle}{An Example Article} 
\newcommand{\TheAuthors}{D. Doe, P. T. Frank, and J. E. Smith}
\headers{\TheTitle}{\TheAuthors}
\title{Symplectic Model-Reduction with a Weighted Inner Product%
  \thanks{%
\funding{Babak Maboudi Afkham is supported by the SNSF under the grant number P1ELP2\_175039. Ashish Bhatt and Bernard Haasdonk gratefully acknowledge the support of DFG grant number HA5821/5-1.}} }
\author{Babak Maboudi Afkham%
  \thanks{Institute of Mathematics (MATH), School of Basic Sciences (FSB), Ecole Polytechnique F\'ed\'erale de Lausanne, 1015 Lausanne, Switzerland (\email{babak.maboudi@epfl.ch}, \email{jan.hesthaven@epfl.ch}).}%
  \and
  Ashish Bhatt%
  \thanks{University of Stuttgart, IANS, Pfaffenwaldring 57, 70569 Stuttgart, Germany (\email{[ashish.bhatt,haasdonk]@mathematik.uni-stuttgart.de}).}
  \and
  Bernard Haasdonk%
  \footnotemark[3]
  \and
  Jan S. Hesthaven%
  \footnotemark[2]
}
\begin{document}

\maketitle

\begin{abstract}
In the recent years, considerable attention has been paid to preserving structures and invariants in reduced basis methods, in order to enhance the stability and robustness of the reduced system. In the context of Hamiltonian systems, symplectic model reduction seeks to construct a reduced system that preserves the symplectic symmetry of Hamiltonian systems. However, symplectic methods are based on the standard Euclidean inner products and are not suitable for problems equipped with a more general inner product. In this paper we generalize symplectic model reduction to allow for the norms and inner products that are most appropriate to the problem while preserving the symplectic symmetry of the Hamiltonian systems. To construct a reduced basis and accelerate the evaluation of nonlinear terms, a greedy generation of a symplectic basis is proposed. Furthermore, it is shown that the greedy approach yields a norm bounded reduced basis. The accuracy and the stability of this model reduction technique is illustrated through the development of reduced models for a vibrating elastic beam and the sine-Gordon equation.
\end{abstract}

\begin{keywords}
Structure Preserving, Weighted MOR, Hamiltonian Systems, Greedy Reduced Basis, Symplectic DEIM
\end{keywords}
\begin{AMS}
78M34, 34C20, 35B30, 37K05, 65P10, 37J25
\end{AMS}

\section{Introduction}
\label{sec:intro}

Reduced order models have emerged as a powerful approach to cope with increasingly complex new applications in engineering and science. These methods substantially reduce the dimensionality of the problem by constructing a reduced configuration space. Exploration of the reduced space is then possible with significant acceleration \cite{hesthaven2015certified,Haasdonk2017}.

Over the past decade, reduced basis (RB) methods have demonstrated great success in lowering of the computational costs of solving elliptic and parabolic differential equations \cite{ito1998reduced,ito2001reduced}. However, model order reduction (MOR) of hyperbolic problems remains a challenge. Such problems often arise from a set of conservation laws and invariants. These intrinsic structures are lost during MOR which results in a qualitatively wrong, and sometimes unstable reduced system \cite{Amsallem:2014ef}.


Recently, the construction of RB methods that conserve intrinsic structures has attracted attention \cite{doi:10.1137/17M1111991,1705.00498,kalashnikova2014stabilization,farhat2015structure,doi:10.1137/110836742,doi:10.1137/140959602,beattie2011structure,doi:10.1137/140978922}. Structure preservation in MOR not only constructs a physically meaningful reduced system, but can also enhance the robustness and stability of the reduced system. In system theory, conservation of passivity can be found in the work of \cite{polyuga2010structure,gugercin2012structure}. Energy preserving and inf-sup stable methods for finite element methods (FEM) are developed in \cite{farhat2015structure,ballarin2015supremizer}. Also, a conservative MOR technique for finite-volume methods is proposed in \cite{1711.11550}.

Moreover, the simulation of reduced models incurs solution errors and the estimation of this error is essential in applications of MOR \cite{HaasdonkOhlberger11,RuinerEtAl12,BhattEtAl18}. Finding tight error bounds for a general reduced system has shown to be computationally expensive and often impractical. Therefore, when one is interested in a cheap surrogate for the error or when the conserved quantity is an output of the system, it becomes imperative to preserve system structures in the reduced model.

In the context of Lagrangian and Hamiltonian systems, recent works provide a promising approach to the construction of robust and stable reduced systems. Carlberg, Tuminaro, and Boggs \cite{Carlberg:2014ky} suggest that a reduced order model of a Lagrangian system be identified by an approximate Lagrangian on a reduced order configuration space. This allows the reduced system to inherit the geometric structure of the original system. A similar approach has been adopted in the work of Peng and Mohseni \cite{doi:10.1137/140978922} and in the work of Maboudi Afkham and Hesthaven \cite{doi:10.1137/17M1111991} for Hamiltonian systems. They construct a low-order symplectic linear vector space, i.e. a vector space equipped with a symplectic 2-form, as the reduced space. Once the symplectic reduced space is generated, a symplectic projection result in a physically meaningful reduced system. A proper time-stepping scheme then preserves the Hamiltonian structure of the reduced system. It is shown in \cite{doi:10.1137/17M1111991,doi:10.1137/140978922} that this approach preserves the overall dynamics of the original system and enhances the stability of the reduced system. Despite the success of these method in MOR of Hamiltonian systems, these techniques are only compatible with the Euclidean inner product. Therefore, the computational structures that arise from a natural inner product of a problem will be lost during MOR.

Weak formulations and inner-products, defined on a Hilbert space, are at the core of the error analysis of many numerical methods for solving partial differential equations. Therefore, it is natural to seek MOR methods that consider such features. At the discrete level, these features often require a Euclidean vector space to be equipped with a generalized inner product, associated with a weight matrix $X$. Many works enabled conventional MOR techniques compatible with such inner products \cite{sen2006natural}. However, a MOR method that simultaneously preserves the symplectic symmetry of Hamiltonian systems remains unknown. 

In this paper, we seek to combine a classical MOR method with respect to a weight matrix with the symplectic MOR. The reduced system constructed by the new method is a generalized Hamiltonian system and the low order configuration space associated with this system is a symplectic linear vector space with a non-standard symplectic 2-form. It is demonstrated that the new method can be viewed as the natural extension to \cite{doi:10.1137/17M1111991}, and therefore retains the structure preserving features, e.g. symplecticity and stability. We also present a greedy approach for the construction of a generalized symplectic basis for the reduced system. Structured matrices are in general not norm bounded \cite{Karow:2006cf}. However, we show that the condition number of the basis generated by the greedy method is bounded by the condition number of the weight matrix $X$. Finally, to accelerate the evaluation of nonlinear terms in the reduced system, we present a variation of the discrete empirical interpolation method (DEIM) that preserves the symplectic structure of the reduced system.

What remains of this paper is organized as follows. In \cref{sec:hamil} we cover the required background on the Hamiltonian and the generalized Hamiltonian systems. \Cref{sec:mor} summarizes classic MOR routine with respect to a weighted norm and the symplectic MOR method with respect to the standard Euclidean inner product. We introduce the symplectic MOR method with respect to a weighted inner product in \cref{sec:normmor}. \Cref{sec:res} illustrates the performance of the new method through a vibrating beam and the sine-Gordon equation. We offer a few conclusive remarks in \cref{sec:conc}.

\section{Hamiltonian systems}
\label{sec:hamil}

In this section we discuss the basic concepts of the geometry of symplectic linear vector spaces and introduce Hamiltonian and Generalized Hamiltonian systems.

\subsection{Generalized Hamiltonian systems}
\label{sec:hamil.1}

Let $(\mathbb R^{2n}, \Omega)$ be a symplectic linear vector space, with $\mathbb R^{2n}$ the configuration space and $\Omega:\mathbb R^{2n}\times\mathbb R^{2n} \to \mathbb R$ a closed, skew-symmetric and non-degenerate 2-form on $\mathbb R^{2n}$. Given a smooth function $H:\mathbb R^{2n} \to \mathbb R$, the so called \emph{Hamiltonian}, the \emph{generalized Hamiltonian system} of evolution reads
\begin{equation} \label{eq:hamil.1}
\left\{
\begin{aligned}
	& \dot z = J_{2n} \nabla_z H(z),  \\
	&  z(0) = z_0.
\end{aligned}
\right.
\end{equation}
Here $z\in \mathbb R^{2n}$ are the configuration coordinates and $J_{2n}$ is a constant, full-rank and skew-symmetric $2n\times 2n$ \emph{structure} matrix such that $\Omega(x,y) = x^TJ_{2n}y$, for all state vectors $x,y\in \mathbb R^{2n}$ \cite{Marsden:2010:IMS:1965128}. Note that there always exists a coordinate transformation $\tilde z = \mathcal T^{-1} z$, with $\mathcal T \in \mathbb R^{2n\times 2n}$, such that $J_{2n}$ takes the form of the \emph{standard} symplectic structure matrix
\begin{equation} \label{eq:hamil.2}
	\mathbb{J}_{2n} = 
	\begin{pmatrix}
	0_n & I_n \\
	-I_n & 0_n
	\end{pmatrix},
\end{equation}
in the new coordinate system \cite{de2006symplectic}.
Here $0_n$ and $I_n$ are the zero matrix and the identity matrix of size $n\times n$, respectively. A central feature of Hamiltonian systems is conservation of the Hamiltonian.

\begin{theorem} \label{thm:1}
\cite{Marsden:2010:IMS:1965128} The Hamiltonian $H$ is a conserved quantity of the Hamiltonian system \eqref{eq:hamil.1} i.e. $H(z(t)) = H(z_0)$ for all $t \geq 0$.
\end{theorem}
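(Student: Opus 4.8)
The plan is to show that $H$ is constant along any trajectory $z(t)$ of \eqref{eq:hamil.1} by verifying that its time derivative vanishes identically. First I would apply the chain rule to the composition $t \mapsto H(z(t))$, which is legitimate since $H$ is smooth and $z(t)$ is a solution of the evolution equation, giving
\begin{equation*}
\frac{d}{dt} H(z(t)) = \nabla_z H(z(t))^T \dot z(t).
\end{equation*}
Next I would substitute the equation of motion $\dot z = J_{2n}\nabla_z H(z)$ from \eqref{eq:hamil.1}, so that the right-hand side becomes the quadratic form
\begin{equation*}
\frac{d}{dt} H(z(t)) = \nabla_z H(z(t))^T J_{2n} \nabla_z H(z(t)).
\end{equation*}

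The decisive step is to observe that this quadratic form vanishes for every argument because $J_{2n}$ is skew-symmetric. Writing $v = \nabla_z H(z(t))$, the scalar $v^T J_{2n} v$ equals its own transpose, and using $J_{2n}^T = -J_{2n}$ gives $v^T J_{2n} v = (v^T J_{2n} v)^T = v^T J_{2n}^T v = -v^T J_{2n} v$, whence $v^T J_{2n} v = 0$. Consequently $\frac{d}{dt} H(z(t)) = 0$ for all $t \geq 0$, so $H(z(t))$ is constant in time and therefore equals its initial value $H(z_0)$, as claimed.

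I do not anticipate a genuine obstacle here; the only point requiring a little care is ensuring that the chain rule applies, which follows from the smoothness of $H$ and the existence of a differentiable solution $z(t)$ to \eqref{eq:hamil.1}. The entire argument hinges on the algebraic identity that a skew-symmetric matrix induces a vanishing quadratic form, and this uses \emph{only} the skew-symmetry of the structure matrix $J_{2n}$ — notably neither its non-degeneracy nor its reducibility to the standard form \eqref{eq:hamil.2} is needed for the conservation statement.
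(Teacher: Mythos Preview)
Your argument is correct and is precisely the standard proof: differentiate $H(z(t))$ along the flow, substitute $\dot z = J_{2n}\nabla_z H$, and use skew-symmetry of $J_{2n}$ to kill the resulting quadratic form. The paper itself does not supply a proof but simply cites the result from \cite{Marsden:2010:IMS:1965128}, where this same computation appears; there is nothing to compare.
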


Under a general coordinate transformation, the equations of evolution of a Hamiltonian system might not take the form (\ref{eq:hamil.1}). Indeed only transformations which preserve the symplectic form, \emph{symplectic transformations}, preserve the form of a Hamiltonian system \cite{Hairer:1250576}. Suppose that $(\mathbb R^{2n},\Omega)$ and $(\mathbb R^{2k},\Lambda)$ are two symplectic linear vector spaces. A transformation $\mu:\mathbb R^{2n}\to\mathbb R^{2k}$ is a symplectic transformation if
\begin{equation} \label{eq:hamil.3}
	\Omega(x,y) = \Lambda(\mu(x),\mu(y)), \quad \text{for all } x,y\in\mathbb R^{2n}.
\end{equation}
In matrix notation, i.e. when we consider a set of basis vectors for $\mathbb R^{2n}$ and $\mathbb R^{2k}$, a linear symplectic transformation is of the form $\mu(x) = Ax$ with $A\in \mathbb R^{2n\times 2k}$ such that
\begin{equation} \label{eq:hamil.4}
	A^T J_{2n} A = J_{2k}.
\end{equation}
We are interested in a class of symplectic transformations that transform a symplectic structure $J_{2n}$ into the standard symplectic structure $\mathbb J_{2k}$.
\begin{definition} \label{def:symp-mat}
Let $J_{2n}\in \mathbb R^{2n\times 2n}$ be a full-rank skew-symmetric structure matrix. A matrix $A\in\mathbb R^{2n\times 2k}$ is $J_{2n}$-symplectic if
\begin{equation} \label{eq:hamil.5}
A^T J_{2n} A = \mathbb{J}_{2k}.
\end{equation}
\end{definition}
Note that in the literature \cite{Marsden:2010:IMS:1965128,Hairer:1250576}, symplectic transformations refer to $\mathbb{J}_{2n}$-symplectic matrices, in contrast to \Cref{def:symp-mat}.

It is natural to expect a numerical integrator that solves (\ref{eq:hamil.1}) to also satisfy the conservation law expressed in  \Cref{thm:1}. Conventional numerical time integrators, e.g. general Runge-Kutta methods, do not generally preserve the symplectic symmetry of Hamiltonian systems which often result in an unphysical behavior of the solution over long time-integration. \emph{Poisson integrators} \cite{Hairer:1250576} are known to preserve the Hamiltonian of \eqref{eq:hamil.1}. To construct a general Poisson integrator, we seek a coordinate transformation $\mathcal T:\mathbb R^{2n}\to\mathbb R^{2n}$, $\tilde z = \mathcal T^{-1}z$, such that $J_{2n} = \mathcal T \mathbb J_{2n} \mathcal T^T$. Then, a \emph{symplectic integrator} can preserve the symplectic structure of the transformed system. The \emph{St\"ormer-Verlet} scheme is an example of a second order symplectic time-integrator given as
\begin{equation} \label{eq:hamil.6}
	\begin{aligned}
	q_{m+1/2} &= q_m + \frac{\Delta t} 2 \cdot \nabla_p \tilde H(p_m,q_{m+1/2}), \\
	p_{m+1} &= p_m - \frac{\Delta t} 2  \cdot \left( \nabla_q \tilde H(p_m,q_{m+1/2}) + \nabla_{q} \tilde H(p_{m+1},q_{m+1/2}) \right), \\
	q_{m+1} &= q_{m+1/2} + \frac{\Delta t} 2  \cdot  \nabla_p \tilde H(p_{m+1},q_{m+1/2}).
	\end{aligned}
\end{equation}
Here, $\tilde z = (q^T,p^T)^T$, $\tilde H(\tilde z) = H(\mathcal T^{-1}z)$, $\Delta t$ denotes a uniform time step-size, and $q_m \approx q(m\Delta t)$ and $p_m \approx p(m\Delta t)$, $m \in \mathbb{N} \cup \{ 0\}$, are approximate numerical solutions. Note that it is important to use a backward stable method to compute the transformation $\mathcal T$. In this paper we use the symplectic Gaussian elimination method with complete pivoting to compute the decomposition $J_{2n} = \mathcal T \mathbb J_{2n} \mathcal T^T$. However, one may use a more computationally efficient method, e.g., a Cholesky-like factorization proposed in \cite{benner:chol} or the isotropic Arnoldi/Lanczos methods \cite{doi:10.1137/S1064827500366434}. There are a few known numerical integrators that preserve the symplectic symmetry of a generalized Hamiltonian system without requiring the computation of the transformation matrix $\mathcal T$ \cite{Hairer:1250576}. The implicit midpoint rule
\begin{equation} \label{eq:hamil.7}
	z_{m+1} = z_{m} + \Delta t \cdot J_{2n} \nabla_z H \left( \frac{z_{m+1} + z_m}{2} \right),
\end{equation}
for \cref{eq:hamil.1} is an example of such integrators. For more on the construction and the applications of Poisson/symplectic integrators, we refer the reader to \cite{Hairer:1250576,bhatt2017structure}.

\section{Model order reduction}
\label{sec:mor}

In this section we summarize the fundamentals of MOR and discuss the conventional approach to MOR with a weighted inner product. We then recall the main results from \cite{doi:10.1137/17M1111991} regarding symplectic MOR. In \cref{sec:normmor} we shall combine the two concepts to introduce the symplectic MOR of Hamiltonian systems with respect to a weighted inner product.

\subsection{Model-reduction with a weighted inner product} \label{sec:mor.1}
Consider a dynamical system of the form
\begin{equation} \label{eq:mor.1}
\left\{
\begin{aligned}
	\dot x(t) &= f(t,x), \\
	x(0) &= x_0.
\end{aligned}
\right.
\end{equation}
where $x\in \mathbb R^{m}$ and $f:\mathbb R \times \mathbb R^{m} \to \mathbb R^{m}$ is some continuous function. In this paper we assume that the time $t$ is the only parameter on which the solution vector $x$ depends. Nevertheless, it is straightforward to generalize the findings of this paper to the case of parametric MOR, where $x$ depends on a larger set of parameters that belong to a closed and bounded subset.

Suppose that $x$ is well approximated by a low dimensional linear subspace with the basis matrix $V=[v_1|\dots|v_k]\in \mathbb R^{m\times k}$, $v_i\in \mathbb R^{m}$ for $i=1,\dots,k$. The approximate solution to (\ref{eq:mor.1}) in this basis reads
\begin{equation} \label{eq:mor.2}
	x \approx Vy,
\end{equation}
where $y \in \mathbb R^k$ are the expansion coefficients of $x$ in the basis $V$. Note that projection of $x$ onto colspan$(V)$ depends on the inner product and the norm defined on (\ref{eq:mor.1}). We define the weighted inner product
\begin{equation} \label{eq:mor.3}
	\left\langle x,y \right\rangle_X = x^TXy,\quad \text{for all } x,y \in \mathbb R^m,
\end{equation}
for some symmetric and positive-definite matrix $X\in \mathbb{R}^{m\times m}$ and refer to $\|\cdot \|_X$ as the $X$-norm associated to this inner product. If we choose $V$ to be an orthonormal basis with respect to the $X$-norm ($V^TXV=I_k$), then the operator
\begin{equation} \label{eq:mor.4}
	P_{X,V}(x) = VV^TXx, \quad \text{for all } x\in \mathbb R^{m}
\end{equation}
becomes idempotent, i.e. $P_{X,V}$ is a projection operator onto colspan$(V)$.

Now suppose that the \emph{snapshot matrix} $S=[x(t_1)|x(t_2)|\ldots|x(t_N)]$ is a collection of $N$ solutions to (\ref{eq:mor.1}) at time instances $t_1,\dots,t_N$. We seek $V$ such that it minimizes the collective projection error of the samples onto colspan$(V)$ which corresponds to the minimization problem
\begin{equation} \label{eq:mor.5}
\begin{aligned}
& \underset{V\in \mathbb{R}^{m\times k}}{\text{minimize}}
& & \sum_{i=1}^N \| x(t_i) - P_{X,V}( x(t_i) ) \|_X^2, \\
& \text{subject to}
& & V^TXV = I_k.
\end{aligned}
\end{equation}
Note that the solution to (\ref{eq:mor.5}) is known as the proper orthogonal decomposition (POD) \cite{hesthaven2015certified,quarteroni2015reduced,gubisch2017proper}. Following \cite{quarteroni2015reduced} the above minimization is equivalent to
\begin{equation} \label{eq:mor.6}
\begin{aligned}
& \underset{\tilde V\in \mathbb{R}^{m\times k}}{\text{minimize}}
& & \| \tilde S - \tilde V \tilde V^T \tilde S \|_F^2, \\
& \text{subject to}
& & \tilde V^T\tilde V = I_k.
\end{aligned}
\end{equation}
where $\tilde V = X^{1/2} V$, $\tilde S = X^{1/2} S$, and $X^{1/2}$ is the matrix square root of $X$. According to the Schmidt-Mirsky-Eckart-Young theorem \cite{Markovsky:2011:LRA:2103589} the solution $\tilde V$ to the minimization (\ref{eq:mor.6}) is the truncated singular value decomposition (SVD) of $\tilde S$. The basis $V$ then is $V = X^{-1/2}\tilde V$. The reduced model of (\ref{eq:mor.1}), using the basis $V$ and the projection $P_{X,V}$, is
\begin{equation} \label{eq:mor.7}
	\left\{
	\begin{aligned}
	\dot y(t) &= V^TX f(t,Vy), \\
	y(0) &= V^TX x_0.
	\end{aligned}
	\right.
\end{equation}
If $k$ can be chosen such that $k \ll m$, then the reduced system (\ref{eq:mor.7}) can potentially be evaluated significantly faster than the full order system (\ref{eq:mor.1}). Finding the matrix square root of $X$ can often be computationally exhaustive. In such cases, explicit use of $X^{1/2}$ can be avoided by finding the eigen-decomposition of the \emph{Gramian} matrix $G = S^TXS$ \cite{quarteroni2015reduced,Haasdonk2017}.

Besides RB methods, there exist other ways of basis generation e.g. greedy strategies, the Krylov subspace method, balanced truncation, Hankel-norm approximation etc. \cite{antoulas2005approximation}. We refer the reader to \cite{hesthaven2015certified,quarteroni2015reduced,Haasdonk2017} for further information regarding the development and the efficiency of reduced order models. 

\subsection{Symplectic MOR} \label{sec:mor.2}
Conventional MOR methods, e.g. those introduced in \cref{sec:mor.1}, do no generally preserve the conservation law expressed in \cref{thm:1}. As mentioned earlier, this often results in the lack of robustness in the reduced system over long time-integration. In this section we summarize the main findings of \cite{doi:10.1137/17M1111991} regarding symplectic model order reduction of Hamiltonian systems with respect to the standard Euclidean inner product. Symplectic MOR aims to construct a reduced system that conserves the geometric symmetry expressed in \Cref{thm:1} which helps with the stability of the reduced system.
Consider a Hamiltonian system of the form
\begin{equation} \label{eq:mor.8}
\left\{
\begin{aligned}
	\dot z(t) &= \mathbb J_{2n} L z(t) + \mathbb J_{2n} \nabla_z f(z), \\
	z(0) &= z_0.
\end{aligned}
\right.
\end{equation}
Here $z\in \mathbb R^{2n}$ is the state vector, $L\in\mathbb R^{2n\times 2n}$ is a symmetric and positive-definite matrix and $f:\mathbb R^{2n}\to\mathbb R$ is sufficiently smooth function. Note that the Hamiltonian for system (\ref{eq:mor.8}) is given by $H(z) = \frac 1 2 z^TLz + f(z)$. Suppose that the solution to (\ref{eq:mor.8}) is well approximated by a low dimensional symplectic subspace. Let $A\in \mathbb{R}^{2n\times 2k}$ be a $\mathbb{J}_{2n}$-symplectic basis containing the basis vectors $A=[e_1|\dots|e_k|f_1|\dots|f_k]$, such that $z \approx Ay$ with $y \in \mathbb{R}^{2k}$ the expansion coefficients of $z$ in this basis. Using the symplectic inverse $A^+ := \mathbb J_{2k}^T A^T \mathbb J_{2n}$ we can construct the reduced system
\begin{equation} \label{eq:mor.9}
	\dot y = A^+ \mathbb J_{2n} (A^+)^T A^T L A y + A^+ \mathbb J_{2n} (A^+)^T \nabla_y f(Ay).
\end{equation}
We refer the reader to \cite{doi:10.1137/17M1111991} for the details of the derivation. It is shown in \cite{doi:10.1137/140978922} that $(A^+)^T$ is also $\mathbb J_{2n}$-symplectic, therefore $A^+ \mathbb J_{2n} (A^+)^T = \mathbb J_{2k}$ and (\ref{eq:mor.9}) reduces to
\begin{equation} \label{eq:mor.10}
	\dot y(t) = \mathbb J_{2k} A^T L A y + \mathbb J_{2k} \nabla_y f(Ay).
\end{equation}
This system is a Hamiltonian system with the Hamiltonian $\mathcal H(y) = \frac 1 2 y^T A^T L A y + f(Ay)$. To reduce the complexity of evaluating the nonlinear term in (\ref{eq:mor.10}), we may apply the discrete empirical interpolation method (DEIM) \cite{barrault2004empirical,Chaturantabut:2010cz,wirtz2014posteriori}. Assuming that $\nabla_z f(z)$ lies near a low dimensional subspace with a basis matrix $U\in \mathbb R^{2n\times r}$ the DEIM approximation reads
\begin{equation} \label{eq:mor.11}
	\nabla_z f(z) \approx U (\mathcal P^T U)^{-1} \mathcal P^T \nabla_z f(z).
\end{equation}
Here $\mathcal P \in \mathbb R^{2n\times r}$ is the interpolating index matrix \cite{Chaturantabut:2010cz}. For a general choice of $U$ the approximation in (\ref{eq:mor.11}) destroys the Hamiltonian structure, if inserted in (\ref{eq:mor.8}). It is shown in \cite{doi:10.1137/17M1111991} that by taking $U = (A^+)^T$ we can recover the Hamiltonian structure in (\ref{eq:mor.10}). Therefore, the reduced system to (\ref{eq:mor.8}) becomes
\begin{equation} \label{eq:mor.12}
\left\{
\begin{aligned}
	\dot y(t) &= \mathbb J_{2k} A^T L A y + \mathbb J_{2k} (A^+)^T(\mathcal P^T (A^+)^T)^{-1} \mathcal P^T \nabla_z f(Ay), \\
	y(0) &= A^+ z_0.
\end{aligned}
\right.
\end{equation}
Note that the Hamiltonian formulation of (\ref{eq:mor.12}) allows us to integrate it using a symplectic integrator. This conserves the symmetry expressed in \Cref{thm:1} at the level of the reduced system. It is also shown in \cite{doi:10.1137/17M1111991,doi:10.1137/140978922} that the stability of the critical points of (\ref{eq:mor.8}) is preserved in the reduced system and the difference of the Hamiltonians of the two system \cref{eq:mor.8,eq:mor.12} is constant. Therefore, the overall behavior (\ref{eq:mor.12}) is close to the full order Hamiltonian system (\ref{eq:mor.8}). In the next subsection we discuss methods for generating a $\mathbb J_{2n}$-symplectic basis $A$.

\subsection{Greedy generation of a $\mathbb J_{2n}$-symplectic basis} \label{sec:mor.3}
Suppose that $S \in \mathbb R^{2n\times N}$ is the snapshot matrix containing the time instances $\{z(t_i)\}_{i=1}^N$ of the solution to (\ref{eq:mor.8}). We seek the $\mathbb J_{2n}$-symplectic basis $A$ such that the collective symplectic projection error of samples in $S$ onto colspan$(A)$ is minimized.
\begin{equation} \label{eq:mor.13}
\begin{aligned}
& \underset{A\in \mathbb{R}^{2n\times 2k}}{\text{minimize}}
& & \| S - P^\text{symp}_{I,A}(S) \|_F^2, \\
& \text{subject to}
& & A^T\mathbb J_{2n}A = \mathbb J_{2k}.
\end{aligned}
\end{equation}
Here $P^\text{symp}_{I,A} = AA^+$ is the symplectic projection operator with respect to the standard Euclidean inner product onto colspan$(A)$. Note that $P^\text{symp}_{I,A} \circ P^\text{symp}_{I,A} = P^\text{symp}_{I,A}$ \cite{doi:10.1137/140978922,doi:10.1137/17M1111991}.

Direct approaches to solve (\ref{eq:mor.13}) are often inefficient. Some SVD-type solutions to (\ref{eq:mor.13}) are proposed by \cite{doi:10.1137/140978922}. However, the form of the suggested basis, e.g. the block diagonal form suggested in \cite{doi:10.1137/140978922}, is not compatible with a general weight matrix $X$. 

The greedy generation of a $\mathbb J_{2n}$-symplectic basis aims to find a near optimal solution to (\ref{eq:mor.13}) in an iterative process. This method increases the overall accuracy of the basis by adding the best possible basis vectors at each iteration. Suppose that $A_{2k} = [e_1|\dots|e_k|\mathbb J_{2n}^T e_1|\dots|\mathbb J_{2n}^T e_k]$ is a $\mathbb J_{2n}$-symplectic and orthonormal basis \cite{doi:10.1137/17M1111991}. The first step of the greedy method is to find the snapshot $z_{k+1}$, that is worst approximated by the basis $A_{2k}$:
\begin{equation} \label{eq:mor.14}
	z_{k+1} := \underset{z \in \{ z(t_i) \}_{i=1}^N}{\text{argmax } }\| z - P^\text{symp}_{I,A_{2k}}(z) \|_2. 
\end{equation}
Note that if $z_{k+1}\neq 0$ then $z_{k+1}$ is not in colspan$(A_{2k})$. Then we obtain a non-trivial vector $e_{k+1}$ by $\mathbb J_{2n}$-orthogonalizing $z_{k+1}$ with respect to $A_{2k}$:
\begin{equation} \label{eq:mor.14.1}
	\tilde z = z_{k+1} - A_{2k}\alpha, \quad e_{k+1} = \frac{\tilde z}{\|\tilde z \|_2}.
\end{equation}
Here, $\alpha\in \mathbb R^{2k}$ are the expansion coefficients of the projection of $z$ onto the column span of $A_{2k}$ where $\alpha_i = -\Omega(z_{k+1},\mathbb J_{2n}^Te_i)$ for $i\leq k$ and $\alpha_i = \Omega(z_{k+1},e_i)$ for $i>k$. Since $\Omega(e_{k+1},\mathbb{J}_{2n}^T e_{k+1}) = \| e_{k+1} \|_2^2 \neq 0$ the enriched basis $A_{2k+2}$ reads
\begin{equation} \label{eq:mor.15}
	A_{2k+2} = [e_1|\dots|e_k|e_{k+1}|\mathbb J_{2n}^Te_1|\dots|\mathbb J_{2n}^Te_{k+1}].	
\end{equation}
It is easily verified that $A_{2k+2}$ is $\mathbb J_{2n}$-symplectic and orthonormal. This enrichment continues until the given tolerance is satisfied. We note that the choice of the orthogonalization routine generally depends on the application. In this paper we use the symplectic Gram-Schmidt (GS) process as the orthogonalization routine. However the isotropic Arnoldi method or the isotropic Lanczos method \cite{doi:10.1137/S1064827500366434} are backward stable alternatives.

MOR is specially useful in reducing parametric models that depend on a closed and bounded parameter set $\mathcal{S} \subset \mathbb R^{d}$ characterizing physical properties of the underlying system. The evaluation of the projection error is impractical for such problems. The loss in the Hamiltonian function can be used as a cheap surrogate to the projection error. Suppose that a $\mathbb J_{2n}$-symplectic basis $A_{2k}$ is given, then one selects a new parameter $\omega_{k+1} \in \mathcal{S}$ by greedy approach:
\begin{equation} \label{eq:mor.16}
	\omega_{k+1} = \underset{\omega \in \mathcal{S}}{\text{argmax } } | H(z(\omega)) - H(P^\text{symp}_{I,A}(z(\omega))) |,
\end{equation}
and then enriches the basis $A_{2k}$ as discussed above. It is shown in \cite{doi:10.1137/17M1111991} that the loss in the Hamiltonian is constant in time. Therefore, $\omega_{k+1}$ can be identified in the \emph{offline phase} before simulating the reduced order model. Note that the relation between the projection error \cref{eq:mor.14} and the error in the Hamiltonian \cref{eq:mor.16} is still unknown.

We summarize the greedy algorithm for generating a $\mathbb J_{2n}$-symplectic basis in \Cref{alg:1}. The first loop constructs a $\mathbb J_{2n}$-symplectic basis for the Hamiltonian system (\ref{eq:mor.8}), and the second loop adds the nonlinear snapshots to the symplectic inverse of the basis. We refer the reader to \cite{doi:10.1137/17M1111991} for more details. In \cref{sec:normmor} we will show how this algorithm can be generalized to support any weighted inner product.

\begin{algorithm} 
\caption{The greedy algorithm for generation of a $\mathbb J_{2n}$-symplectic basis} \label{alg:1}
{\bf Input:} Tolerated projection error $\delta$, initial condition $ z_0$, snapshots $\mathcal Z = \{ z(t_i) \}_{i=1}^{N}$ and $\mathcal G = \{ \nabla f(z(t_i)) \}_{i=1}^{N}$
\begin{enumerate}
\item $e_1 \leftarrow \frac{z_0}{\|z_0\|_2}$
\item $A \leftarrow [e_1|\mathbb J^T_{2n}e_1]$
\item $k \leftarrow 1$
\item \textbf{while} $\| z - P^\text{symp}_{I,A}( z ) \|_2 > \delta$ for any $z\in \mathcal Z$
\item \hspace{0.5cm} $z_{k+1} := \underset{z\in \mathcal Z}{\text{argmax }} \| z - P^\text{symp}_{I,A}( z ) \|_2$
\item \hspace{0.5cm} $\mathbb J_{2n}$-orthogonalize $ z_{k+1}$ to obtain $e_{k+1}$
\item \hspace{0.5cm} $A \leftarrow [e_1|\dots |e_{k+1} | \mathbb J^T_{2n}e_1|\dots,\mathbb J^T_{2n}e_{k+1}]$
\item \hspace{0.5cm} $k \leftarrow k+1$
\item \textbf{end while}
\item compute $(A^+)^T=[e'_1|\dots|e'_k|\mathbb J^T_{2n}e'_1|\dots|\mathbb J^T_{2n}e'_k]$
\item \textbf{while} $\| g - P^\text{symp}_{I,(A^+)^T}(g) \|_2 > \delta$ for all $g \in \mathcal G$
\item \hspace{0.5cm} $g_{k+1} := \underset{g \in \mathcal G}{\text{argmax }} \| g - P^\text{symp}_{I,(A^+)^T}(g) \|_2$
\item \hspace{0.5cm} $\mathbb J_{2n}$-orthogonalize $g_{k+1}$ to obtain $e'_{k+1}$
\item \hspace{0.5cm} $(A^+)^T \leftarrow [e'_1|\dots |e'_{k+1} | \mathbb J^T_{2n}e'_1|\dots|\mathbb J^T_{2n}e'_{k+1}]$
\item \hspace{0.5cm} $k \leftarrow k+1$
\item \textbf{end while}
\item $A \leftarrow \left( \left( \left( A^+\right) ^T \right ) ^+ \right)^T$
\end{enumerate}
\vspace{0.5cm}
{\bf Output:} $\mathbb J_{2n}$-symplectic basis $A$.
\end{algorithm}

\section{Symplectic MOR with weighted inner product} \label{sec:normmor}

In this section we combine the concept of model reduction with a weighted inner product, discussed in \cref{sec:mor.1}, with the symplectic model reduction discussed in \cref{sec:mor.2}. We will argue that the new method can be viewed as a natural extension of the original symplectic method. Finally, we generalize the greedy method for the symplectic basis generation, and the symplectic model reduction of nonlinear terms to be compatible with any non-degenerate weighted inner product.

\subsection{Generalization of the symplectic projection} \label{sec:normmor.1}
As discussed in \cref{sec:mor.1}, the error analysis of methods for solving partial differential equations often requires the use of a weighted inner product. This is particularly important when dealing with Hamiltonian systems, where the system energy can induce a norm that is fundamental to the dynamics of the system.

Consider a Hamiltonian system of the form (\ref{eq:mor.8}) together with the weighted inner product defined in (\ref{eq:mor.3}) with $m=2n$. Also suppose that the solution $z$ to (\ref{eq:mor.8}) is well approximated by a $2k$ dimensional symplectic subspace with the basis matrix $A$. We seek to construct a projection operator that minimizes the projection error with respect to the $X$-norm while preserving the symplectic dynamics of (\ref{eq:mor.8}) in the projected space. Consider the operator $P: \mathbb R^{2n} \to \mathbb R^{2n}$ be defined as
\begin{equation} \label{eq:normmor.1}
	P = A \mathbb J_{2k}^T A^T X \mathbb J_{2n} X.
\end{equation}
It is easy to show that $P$ is idempotent if and only if
\begin{equation} \label{eq:normmor.2}
	\mathbb J_{2k}^T A^T X \mathbb J_{2n} X A = I_{2k},
\end{equation}
in which case $P$ is a projection operator onto colspan$(A)$. Suppose that $S$ is the snapshot matrix containing the time samples $\{z(t_i)\}_{i=1}^N$ of the solution to (\ref{eq:mor.8}). We seek to find the basis $A$ that minimizes the collective projection error of snapshots with respect to the $X$-norm,
\begin{equation} \label{eq:normmor.3}
\begin{aligned}
& \underset{A\in \mathbb{R}^{2n\times 2k}}{\text{minimize}}
& & \sum_{i=1}^N \| z(t_i) - P(z(t_i)) \|_X^2, \\
& \text{subject to}
& & \mathbb J_{2k}^T A^T X \mathbb J_{2n} X A = I_{2k}.
\end{aligned}
\end{equation}
By (\ref{eq:normmor.1}) we have
\begin{equation} \label{eq:normmor.4}
\begin{aligned}
	\sum_{i=1}^N \| z(t_i) - P(z(t_i)) \|_X^2 &= \sum_{i=1}^N \| z(t_i) - A \mathbb J_{2k}^T A^T X \mathbb J_{2n} X z(t_i) \|_X^2 \\
	&= \sum_{i=1}^N \| X^{1/2}z(t_i) - X^{1/2} A \mathbb J_{2k}^T A^T X \mathbb J_{2n} X z(t_i) \|_2^2 \\
	&= \| X^{1/2} S - X^{1/2} A \mathbb J_{2k}^T A^T X \mathbb J_{2n} X S \|_F^2 \\
	&= \| \tilde S - \tilde A \tilde A ^+ \tilde S \|_F^2.
\end{aligned}
\end{equation}
Here $\tilde S = X^{1/2} S$, $\tilde A = X^{1/2} A$, and $\tilde A^+ = \mathbb J_{2k}^T \tilde A^T J_{2n}$ is the symplectic inverse of $\tilde A$ with respect to the skew-symmetric matrix $J_{2n} = X^{1/2} \mathbb J_{2n} X^{1/2}$. Note that the symplectic inverse in (\ref{eq:normmor.4}) is a generalization of the symplectic inverse introduced in \cref{sec:mor.2}. Therefore, we may use the same notation (the superscript $+$) for both. We summarized the properties of this generalization in \Cref{thm:2}. With this notation, the condition (\ref{eq:normmor.2}) turns into $\tilde A ^+ \tilde A = I_{2k}$ which is equivalent to $\tilde A ^T J_{2n} \tilde A = \mathbb J_{2k}$. In other words, this condition implies that $\tilde A$ has to be a $J_{2n}$-symplectic matrix. Finally we can rewrite the minimization (\ref{eq:normmor.3}) as
\begin{equation} \label{eq:normmor.5}
\begin{aligned}
& \underset{\tilde A\in \mathbb{R}^{2n\times 2k}}{\text{minimize}}
& & \| \tilde S - P^\text{symp}_{X,\tilde A}(\tilde S) \|_F, \\
& \text{subject to}
& & \tilde A^T J_{2n} \tilde A = \mathbb J_{2k}.
\end{aligned}
\end{equation}
where $P^\text{symp}_{X,\tilde A} = \tilde A \tilde A^+$ is the symplectic projection with respect to the $X$-norm onto the colspan($\tilde A$). At first glance, the minimization (\ref{eq:normmor.5}) might look similar to (\ref{eq:mor.13}). However, since $\tilde A$ is $J_{2n}$-symplectic, and the projection operator depends on $X$, we need to seek an alternative approach to find a near optimal solution to (\ref{eq:normmor.5}). 

As (\ref{eq:mor.13}), direct approaches to solving (\ref{eq:normmor.5}) are impractical. Furthermore, there are no SVD-type methods known to the authors, that solve (\ref{eq:normmor.5}). However, the greedy generation of the symplectic basis can be generalized to generate a near optimal basis $\tilde A$. The generalized greedy method is discussed in \cref{sec:normmor.2}.

Now suppose that a basis $A=X^{-1/2}\tilde A$, with $\tilde A$ solving (\ref{eq:normmor.5}), is available such that $z \approx Ay$ with $y\in \mathbb R^{2k}$, the expansion coefficients of $z$ in the basis of $A$. Using (\ref{eq:normmor.2}) we may write the reduced system to (\ref{eq:mor.8}) as
\begin{equation} \label{eq:normmor.6}
	\dot y = \mathbb J_{2k}^T A^T X \mathbb J_{2n} X \mathbb{J}_{2n} LAy + \mathbb J_{2k}^T A^T X \mathbb J_{2n} X \mathbb{J}_{2n} \nabla_z f(Ay).
\end{equation}
Since $(\mathbb J_{2k}^T A^T X \mathbb J_{2n} X) A = I_{2k}$, we may use the chain rule to write
\begin{equation} \label{eq:normmor.7}
	\nabla_z H(z) = ( \mathbb J_{2k}^T A^T X \mathbb J_{2n} X )^T \nabla_y H(Ay).
\end{equation}
Finally, as $\nabla_z H(z) = Lz + \nabla_z f(z)$, the reduced system (\ref{eq:normmor.6}) becomes
\begin{equation} \label{eq:normmor.8}
\left\{
\begin{aligned}
	\dot y(t) &= J_{2k} A^T L A y + J_{2k} \nabla_y f(Ay), \\
	y(0) &= \mathbb J_{2k}^T A^T X \mathbb J_{2n} X z_0,
\end{aligned}
\right.
\end{equation}
where $J_{2k}=\tilde A^+ J_{2n} (\tilde A^+)^T$ is a skew-symmetric matrix. The system (\ref{eq:normmor.8}) is a generalized Hamiltonian system with the Hamiltonian defined as $\mathcal H(y) = \frac 1 2 y^TA^TLAy + f(Ay)$. Therefore, a Poisson integrator preserves the symplectic symmetry associated with (\ref{eq:normmor.8}).

We close this section by summarizing the properties of the symplectic inverse in the following theorem.
\begin{theorem} \label{thm:2}
Let $A\in \mathbb R^{2n\times 2k}$ be a $J_{2n}$-symplectic basis where $J_{2n}\in\mathbb R^{2n\times 2n}$ is a full rank and skew-symmetric matrix. Furthermore, suppose that $A^{+} = \mathbb{J}_{2k}^T A^T J_{2n}$ is the symplectic inverse. Then the following holds:
\begin{enumerate}
\item $A^+A = I_{2k}$.
\item $(A^+)^T$ is $J_{2n}^{-1}$-symplectic.
\item $\left(\left(\left(A^+\right)^T\right)^+\right)^T = A$.
\item Let $J_{2n}=X^{1/2}\mathbb J_{2n} X^{1/2}$. Then $A$ is ortho-normal with respect to the $X$-norm, if and only if $(A^+)^T$ is ortho-normal with respect to the $X^{-1}$-norm.
\end{enumerate}
\end{theorem}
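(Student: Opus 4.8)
The plan is to establish all four claims by direct matrix algebra, substituting the definition $A^{+} = \mathbb{J}_{2k}^T A^T J_{2n}$ and repeatedly invoking the defining relation $A^T J_{2n} A = \mathbb{J}_{2k}$ of \Cref{def:symp-mat} together with the elementary identities $\mathbb{J}_{2k}^T \mathbb{J}_{2k} = I_{2k}$, $\mathbb{J}_{2k}^2 = -I_{2k}$, $\mathbb{J}_{2k}^T = \mathbb{J}_{2k}^{-1} = -\mathbb{J}_{2k}$, and the skew-symmetry $J_{2n}^T = -J_{2n}$ (hence $(J_{2n}^{-1})^T = -J_{2n}^{-1}$). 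Claim (1) serves as the template: I would write $A^{+} A = \mathbb{J}_{2k}^T (A^T J_{2n} A) = \mathbb{J}_{2k}^T \mathbb{J}_{2k} = I_{2k}$, which already isolates the two facts that do all the work throughout, namely the defining symplecticity relation and $\mathbb{J}_{2k}^T\mathbb{J}_{2k}=I_{2k}$.

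For (2) I would transpose the definition to get $(A^{+})^T = J_{2n}^T A \mathbb{J}_{2k}$ and then evaluate the symplectic form of $(A^{+})^T$ against $J_{2n}^{-1}$: the factor $J_{2n} J_{2n}^{-1}$ collapses to the identity, leaving $\mathbb{J}_{2k}^T (A^T J_{2n}^T A)\mathbb{J}_{2k}$. Recognizing $A^T J_{2n}^T A = (A^T J_{2n} A)^T = \mathbb{J}_{2k}^T$ reduces this to a product of $\mathbb{J}_{2k}$-factors, and the conclusion follows after noting that $\mathbb{J}_{2k}^{-1} = -\mathbb{J}_{2k}$ is itself a standard symplectic structure, so that $(A^{+})^T$ is symplectic relative to $J_{2n}^{-1}$. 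The inversion of $J_{2n}$ is well defined precisely because $J_{2n}$ is assumed full rank.

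Claim (3) is then obtained by iterating: having shown in (2) that $(A^{+})^T$ is $J_{2n}^{-1}$-symplectic, I would apply the symplectic-inverse construction once more with structure matrix $J_{2n}^{-1}$, namely $((A^{+})^T)^{+} = \mathbb{J}_{2k}^T ((A^{+})^T)^T J_{2n}^{-1} = \mathbb{J}_{2k}^T A^{+} J_{2n}^{-1}$, substitute $A^{+} = \mathbb{J}_{2k}^T A^T J_{2n}$, and let the factor $J_{2n} J_{2n}^{-1}$ cancel again; the two accumulated $\mathbb{J}_{2k}^T$ factors combine through $(\mathbb{J}_{2k}^T)^2 = -I_{2k}$, and one final transpose recovers $A$. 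For (4) I would substitute $J_{2n} = X^{1/2}\mathbb{J}_{2n}X^{1/2}$ into $A^{+} X^{-1} (A^{+})^T$; the central block $\mathbb{J}_{2n}(X^{1/2} X^{-1} X^{1/2})\mathbb{J}_{2n}^T$ collapses via $X^{1/2}X^{-1}X^{1/2}=I_{2n}$ and $\mathbb{J}_{2n}\mathbb{J}_{2n}^T = I_{2n}$, leaving $A^{+} X^{-1}(A^{+})^T = \mathbb{J}_{2k}^T (A^T X A)\mathbb{J}_{2k}$. Since the congruence $M \mapsto \mathbb{J}_{2k}^T M \mathbb{J}_{2k}$ fixes $I_{2k}$ and is invertible, the left-hand side equals $I_{2k}$ if and only if $A^T X A = I_{2k}$, which is exactly the stated equivalence between $X$-orthonormality of $A$ and $X^{-1}$-orthonormality of $(A^{+})^T$.

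The step I expect to demand the most care is the sign bookkeeping in (2)--(3): every transpose turns $J_{2n}$ and $\mathbb{J}_{2k}$ into their negatives, and the products of $\mathbb{J}_{2k}^T$ factors alternate sign, so the claims only come out correctly once one consistently treats $\mathbb{J}_{2k}$ and its inverse $-\mathbb{J}_{2k}$ as the same standard symplectic structure. Keeping this convention explicit is what guarantees that the double application of the symplectic inverse in (3) returns $A$ rather than $-A$.
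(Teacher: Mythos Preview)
Your approach is correct and matches the paper's, which simply states that all four claims follow directly from the definition of a symplectic basis; your proposal carries out precisely that direct verification. Your closing remark about the sign bookkeeping in (2)--(3) is apt, as the computation indeed yields $-\mathbb{J}_{2k}$ rather than $\mathbb{J}_{2k}$ unless one identifies $\mathbb{J}_{2k}$ with $-\mathbb{J}_{2k}=\mathbb{J}_{2k}^{-1}$ as equivalent standard structures.
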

\begin{proof}
It is straightforward to show all statements using the definition of a symplectic basis.
\end{proof}

\subsection{Stability Conservation} It is shown in \cite{doi:10.1137/140978922,doi:10.1137/17M1111991} that a Hamiltonian reduced system constructed by the projection $P^{\text{symp}}_{I,A}$ preserves the stability of stable equilibrium points of \cref{eq:mor.12}, and therefore, preserves the overall dynamics. In this section, we discuss that the stability of equilibrium points is also conserved using the projection operator $P^{\text{symp}}_{X,\tilde A}$.

\begin{proposition} \label{prop:new1}
\cite{bhatia2002stability} An equilibrium point $z_e\in \mathbb R^{2n}$ is Lyapunov stable if there exists a scalar function $W:\mathbb R^{2n} \to \mathbb R$ such that $\nabla W(z_e) = 0$, $\nabla^2 W(z_e)$ is positive definite, and that for any trajectory $z(t)$ defined in the neighborhood of $z_e$, we have $\frac{d}{dt} W(z(t))\leq 0$. Here $\nabla^2 W$ is the Hessian matrix of $W$, and $W$ is commonly referred to as a Lyapunov function.
\end{proposition}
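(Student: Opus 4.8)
The statement is the classical Lyapunov stability theorem, so the plan is to verify the $\epsilon$--$\delta$ definition of stability directly, using sublevel sets of $W$ as positively invariant trapping neighborhoods. Without loss of generality I would first translate coordinates so that $z_e = 0$ and subtract the constant $W(z_e)$, arranging $W(z_e) = 0$; neither operation affects the hypotheses $\nabla W(z_e) = 0$, the positive definiteness of $\nabla^2 W(z_e)$, nor the sign of $\frac{d}{dt} W(z(t))$ along trajectories.

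The first substantive step is to upgrade the second-order data at $z_e$ into a local positive-definiteness statement for $W$ itself. Since $\nabla W(z_e) = 0$ and the Hessian $\nabla^2 W(z_e)$ is positive definite, a second-order Taylor expansion of $W$ about $z_e$ shows that $z_e$ is a \emph{strict} local minimum: there is a radius $\rho > 0$ such that $W(z) > 0 = W(z_e)$ for all $z$ with $0 < \|z - z_e\| \le \rho$.

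Next comes the core trapping argument. Fix $\epsilon$ with $0 < \epsilon \le \rho$. The sphere $\{ z : \|z - z_e\| = \epsilon \}$ is compact and $W$ is continuous and strictly positive on it, so $W$ attains a minimum $m > 0$ there. By continuity of $W$ together with $W(z_e) = 0$, I would choose $\delta \in (0,\epsilon)$ small enough that $\|z - z_e\| < \delta$ forces $W(z) < m$. Now take any trajectory $z(t)$ with $\|z(0) - z_e\| < \delta$. The hypothesis $\frac{d}{dt} W(z(t)) \le 0$ makes $W$ nonincreasing along the flow, so $W(z(t)) \le W(z(0)) < m$ for all $t \ge 0$. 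Since $z(t)$ is continuous and starts strictly inside the ball of radius $\delta < \epsilon$, if it ever left the ball of radius $\epsilon$ it would have to cross the sphere $\|z - z_e\| = \epsilon$, where $W \ge m$, contradicting $W(z(t)) < m$. Hence $\|z(t) - z_e\| < \epsilon$ for all $t \ge 0$, which is exactly the definition of Lyapunov stability.

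The main obstacle is not any single estimate but ordering the two continuity/compactness selections correctly: the threshold $m$ must be pinned to the $\epsilon$-sphere \emph{before} the initial-condition radius $\delta$ is chosen, and one must first shrink the working neighborhood to radius $\rho$ so that $W$ is genuinely positive definite there (i.e. so the Taylor remainder is dominated by the Hessian term). A secondary subtlety worth flagging is that the hypothesis gives only $\frac{d}{dt} W \le 0$ rather than strict decrease; this is exactly enough for invariance of sublevel sets and hence for stability, but it yields no convergence, so no attempt should be made to prove asymptotic stability or attraction of $z_e$.
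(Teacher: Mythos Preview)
Your proof is correct and is the standard sublevel-set trapping argument for Lyapunov's direct method. The paper, however, does not prove this proposition at all: it is stated with a citation to \cite{bhatia2002stability} and used as a black-box tool for the subsequent theorem on stability preservation under symplectic reduction. So there is nothing to compare against---you have supplied a complete classical proof where the authors simply invoke the literature.
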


It is shown in \cite{doi:10.1137/17M1111991} that the stable points of the Hamiltonian reduced system constructed using the projection $P^{\text{symp}}_{X,\tilde A}$ is Lyapunov stable. However, since the proof only requires the conservation of the Hamiltonian and the positive definiteness of $\mathcal H$, the proof also holds for generalized Hamiltonian reduced systems.

\begin{theorem}
\cite{doi:10.1137/17M1111991} Consider a Hamiltonian system of the form \cref{eq:mor.8} together with the reduced system \cref{eq:normmor.8}. Suppose that $z_e$ is an equilibrium point for \cref{eq:mor.8} and that $y_e = \tilde A ^+ X^{1/2} z_e$. If $H$ (or $-H$) is a Lyapunov function satisfying \cref{prop:new1}, then $z_e$ and $y_e$ are Lyapunov stable equilibrium points for \cref{eq:mor.8} and \cref{eq:normmor.8}, respectively.
\end{theorem}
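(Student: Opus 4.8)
The plan is to apply \cref{prop:new1} to each system using the (generalized) Hamiltonian as the Lyapunov function, exactly as in the Euclidean case. For the full system the hypothesis already hands us $H$ (or $-H$) as a candidate Lyapunov function, so only the decay condition needs checking; for the reduced system the natural candidate is the reduced Hamiltonian $\mathcal H(y)=\tfrac12 y^T A^T L A y + f(Ay)=H(Ay)$, and the work is to verify that it (or its negative) satisfies all three conditions of \cref{prop:new1} at $y_e$. Throughout I will use that $A=X^{-1/2}\tilde A$ has full column rank $2k$ (since $\tilde A$ is $J_{2n}$-symplectic and $X^{1/2}$ is invertible), and that $\tilde A^+\tilde A=I_{2k}$ from \cref{thm:2}.

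First I would dispatch $z_e$. Since $\mathbb J_{2n}$ has full rank, the equilibrium condition $\mathbb J_{2n}\nabla_z H(z_e)=0$ for \eqref{eq:mor.8} forces $\nabla_z H(z_e)=0$. By \cref{thm:1} the Hamiltonian is conserved along any trajectory, so $\tfrac{d}{dt}H(z(t))=0\le 0$. Combined with the assumed positive-definiteness of $\nabla^2 H(z_e)$ (or of $-\nabla^2 H(z_e)$, taking $-H$), \cref{prop:new1} yields Lyapunov stability of $z_e$ immediately.

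The reduced case is the substance. I would first observe that \eqref{eq:normmor.8} is the generalized Hamiltonian system $\dot y = J_{2k}\nabla_y\mathcal H(y)$ with $J_{2k}$ skew-symmetric, so $\tfrac{d}{dt}\mathcal H(y(t))=\nabla_y\mathcal H^T J_{2k}\nabla_y\mathcal H=0\le 0$, giving the decay condition for free. The key algebraic step is the identity $A y_e = P z_e$: using $A=X^{-1/2}\tilde A$, $y_e=\tilde A^+ X^{1/2}z_e$, and $P^{\text{symp}}_{X,\tilde A}=\tilde A\tilde A^+$, one computes $A y_e = X^{-1/2}\tilde A\tilde A^+ X^{1/2}z_e = X^{-1/2}P^{\text{symp}}_{X,\tilde A}X^{1/2}z_e$, which is exactly the projection $P$ of \eqref{eq:normmor.1} applied to $z_e$. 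When $z_e\in$ colspan$(A)$ the projection fixes it, so $A y_e=z_e$. Granting this, the chain rule gives $\nabla_y\mathcal H(y_e)=A^T\nabla_z H(Ay_e)=A^T\nabla_z H(z_e)=0$, so $y_e$ is an equilibrium (indeed $\dot y|_{y_e}=J_{2k}\cdot 0=0$, with no invertibility of $J_{2k}$ required), and $\nabla_y^2\mathcal H(y_e)=A^T\nabla_z^2 H(z_e)A$ is positive definite because $\nabla_z^2 H(z_e)$ is and $A$ has full column rank. Thus $\mathcal H$ (or $-\mathcal H$) meets all conditions of \cref{prop:new1}, proving $y_e$ Lyapunov stable.

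The main obstacle, and the point requiring care, is the relation $A y_e=z_e$, on which both the vanishing of $\nabla_y\mathcal H(y_e)$ and the definiteness of the reduced Hessian rest. This holds precisely when the symplectic projection $P$ leaves $z_e$ invariant, i.e. when the equilibrium lies in colspan$(A)$ — the natural regime, since the reduced basis is built to capture the relevant states. I would therefore make this assumption explicit (or equivalently assume $z_e=P z_e$); it is the exact weighted analogue of what the Euclidean result in \cite{doi:10.1137/17M1111991} needs. A secondary technical check is the consistency of the two expressions for $y_e$, namely $\tilde A^+ X^{1/2}z_e=\mathbb J_{2k}^T A^T X\mathbb J_{2n}X z_e$, which follows by expanding $\tilde A^+=\mathbb J_{2k}^T\tilde A^T J_{2n}$ with $\tilde A=X^{1/2}A$ and $J_{2n}=X^{1/2}\mathbb J_{2n}X^{1/2}$, matching the initial-condition map of \eqref{eq:normmor.8}.
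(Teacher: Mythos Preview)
Your proposal is correct and follows exactly the approach the paper indicates: the paper does not give an explicit proof but remarks that the argument from \cite{doi:10.1137/17M1111991} carries over because it ``only requires the conservation of the Hamiltonian and the positive definiteness of $\mathcal H$,'' which are precisely the two ingredients you verify for the reduced system. Your explicit identification of the implicit hypothesis $z_e\in\mathrm{colspan}(A)$ (equivalently $Pz_e=z_e$) is a useful clarification that the paper leaves tacit.
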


\subsection{Greedy generation of a $J_{2n}$-symplectic basis} \label{sec:normmor.2}
In this section we modify the greedy algorithm introduced in \cref{sec:mor.3} to construct a $J_{2n}$-symplectic basis. Ortho-normalization is an essential step in greedy approaches to basis generation \cite{hesthaven2015certified,quarteroni2015reduced}. Here, we summarize a variation of the GS orthogonalization process, known as the \emph{symplectic GS} process.

Suppose that $\Omega_{J_{2n}}$ is a symplectic form defined on $\mathbb R^{2n}$ such that $\Omega_{J_{2n}}(x,y) = x^T J_{2n} y$, for all $x,y\in \mathbb R^{2n}$ and some full rank and skew-symmetric matrix $J_{2n} = X^{1/2} \mathbb J_{2n} X^{1/2}$. We would like to build a basis of size $2k+2$ in an iterative manner and start with some initial vector, e.g. $e_1 = z_0/\| z_0 \|_X$. It is known that a symplectic basis has an even number of basis vectors \cite{Marsden:2010:IMS:1965128}. We may take $Te_1$, where $T = X^{-1/2} \mathbb J_{2n}^{T}X^{1/2}$, as a candidate for the second basis vector. It is easily verified that $\tilde A_2=[e_1|Te_1]$ is $J_{2n}$-symplectic and consequently, $\tilde A_2$ is the first basis generated by the greedy approach. Next, suppose that $\tilde A_{2k} = [e_1|\dots|e_k|Te_1|\dots|Te_k]$ is generated in the $k$th step of the greedy method and $z\not \in \text{colspan}\left(\tilde A_{2k}\right)$ is provided. We aim to $J_{2n}$-orthogonalize $z$ with respect to the basis $\tilde A_{2k}$. This means we seek a coefficient vector $\alpha\in \mathbb R^{2k}$ such that
\begin{equation} \label{eq:normmor.9}
	\Omega_{J_{2n}}\left( z + \tilde A_{2k}\alpha ,  y \right) = 0,
\end{equation}
for all possible $y \in \text{colspan}(\tilde A_{2k})$. It is easily checked that \eqref{eq:normmor.9} has the unique solution $\alpha_i = -\Omega_{J_{2n}}(z,Te_i)$ for $i\leq k$ and $\alpha_i = \Omega_{J_{2n}}(z,e_i)$ for $i>k$, i.e., $z$ has a unique symplectic projection. If we take $\tilde z = z + \tilde A_{2k}\alpha$, then the next candidate pair of basis vectors are $e_{k+1} = \tilde z / \| \tilde z \|_X$ and $Te_{k+1}$. Finally, the basis generated at the $(k+1)$-th step of the greedy method is given by
\begin{equation} \label{eq:normmor.11}
	\tilde A_{2k+2} = [e_1|\dots|e_{k+1}|Te_1|\dots|Te_{k+1}].
\end{equation}
\Cref{thm:3} guarantees that the column vectors of $\tilde A_{2k+2}$ are linearly independent. Furthermore, it is checked easily that $\tilde A_{2k+2}$ is $J_{2n}$-symplectic. We note that the symplectic GS orthogonalization process is chosen due to its simplicity. However, in problems where there is a need for a large basis, this process might be impractical. In such cases, one may use a backward stable routine, e.g. the isotropic Arnoldi method or the isotropic Lanczos method \cite{doi:10.1137/S1064827500366434}.

It is well known that a symplectic basis, in general, is not norm bounded \cite{doi:10.1137/050628519}. The following theorem guarantees that the greedy method for generating a $J_{2n}$-symplectic basis yields a bounded basis.
\begin{theorem} \label{thm:3}
The basis generated by the greedy method for constructing a $J_{2n}$-symplectic basis is orthonormal with respect to the $X$-norm.
\end{theorem}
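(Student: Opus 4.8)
The plan is to prove the stronger matrix statement that $\tilde A_{2k}^{T}X\tilde A_{2k} = I_{2k}$ holds at every stage of the greedy construction, and to do so by induction on the iteration index $k$. Before entering the induction I would isolate the two algebraic identities that carry the whole argument. From the definitions $J_{2n} = X^{1/2}\mathbb J_{2n}X^{1/2}$ and $T = X^{-1/2}\mathbb J_{2n}^{T}X^{1/2}$, together with the relations $\mathbb J_{2n}\mathbb J_{2n}^{T} = I_{2n}$ and $\mathbb J_{2n}^{2} = -I_{2n}$ for the standard structure matrix, a direct computation gives
\[
	J_{2n}T = X \qquad\text{and}\qquad T^{2} = -I_{2n}.
\]
The first identity is the bridge between the two geometries: for any $u,v\in\mathbb R^{2n}$ it yields $\langle u,v\rangle_X = u^{T}Xv = u^{T}J_{2n}Tv = \Omega_{J_{2n}}(u,Tv)$, so an $X$-inner product is exactly a symplectic pairing against the $T$-image of one argument. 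Combining the two identities also shows that $T$ is an $X$-isometry, $\langle Tu,Tv\rangle_X = \langle u,v\rangle_X$.

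With these in hand the base case is immediate: for $\tilde A_2 = [e_1\,|\,Te_1]$ with $e_1 = z_0/\|z_0\|_X$ one has $\|e_1\|_X = 1$, $\|Te_1\|_X = \|e_1\|_X = 1$ by the isometry property, and $\langle e_1,Te_1\rangle_X = \Omega_{J_{2n}}(e_1,T^{2}e_1) = -\Omega_{J_{2n}}(e_1,e_1) = 0$ by skew-symmetry. For the inductive step I would assume $\tilde A_{2k}$ is $X$-orthonormal and analyse the new vector $\tilde z = z + \tilde A_{2k}\alpha$, which by construction (see \eqref{eq:normmor.9}) is $\Omega_{J_{2n}}$-orthogonal to every column of $\tilde A_{2k}$. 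Translating through the bridge identity gives $\langle\tilde z,e_i\rangle_X = \Omega_{J_{2n}}(\tilde z,Te_i) = 0$ and $\langle\tilde z,Te_i\rangle_X = \Omega_{J_{2n}}(\tilde z,T^{2}e_i) = -\Omega_{J_{2n}}(\tilde z,e_i) = 0$ for $i\le k$, so after normalization $e_{k+1} = \tilde z/\|\tilde z\|_X$ is $X$-orthogonal to all previous basis vectors and has unit $X$-norm. It then remains to verify the partner vector $Te_{k+1}$: $\|Te_{k+1}\|_X = 1$ by the isometry, $\langle e_{k+1},Te_{k+1}\rangle_X = \Omega_{J_{2n}}(e_{k+1},T^{2}e_{k+1}) = 0$ by skew-symmetry, $\langle Te_{k+1},Te_i\rangle_X = \langle e_{k+1},e_i\rangle_X = 0$ by the isometry, and $\langle Te_{k+1},e_i\rangle_X = \Omega_{J_{2n}}(e_{k+1},e_i) = 0$ since $e_{k+1}\propto\tilde z$ is symplectically orthogonal to $e_i$. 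Assembling these entries yields $\tilde A_{2k+2}^{T}X\tilde A_{2k+2} = I_{2k+2}$, closing the induction.

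The main obstacle is conceptual rather than computational: symplectic orthogonality with respect to $\Omega_{J_{2n}}$ is a priori unrelated to orthogonality in the $X$-inner product, so the statement is genuinely in need of proof. The whole argument hinges on recognizing that pairing each $e_i$ with its specific image $Te_i$, rather than with an arbitrary symplectic complement, is exactly what makes the symplectic Gram--Schmidt step coincide with $X$-orthogonalization, and that this coincidence is encoded in the single identity $J_{2n}T = X$. Once that identity is isolated, every orthogonality and normalization relation collapses to skew-symmetry of $\Omega_{J_{2n}}$ and the isometry property of $T$, so no further estimates are required.
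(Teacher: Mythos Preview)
Your proof is correct and rests on the same core identity as the paper's, namely $J_{2n}T=X$ (equivalently $\langle u,v\rangle_X=\Omega_{J_{2n}}(u,Tv)$), together with $T^{2}=-I_{2n}$. The difference is purely organizational: the paper does not induct on $k$ but instead uses the fact, recorded just before the theorem, that the greedy output $\tilde A_{2k}=[e_1|\dots|e_k|Te_1|\dots|Te_k]$ is $J_{2n}$-symplectic, i.e.\ $\tilde A_{2k}^{T}J_{2n}\tilde A_{2k}=\mathbb J_{2k}$. Reading off the blocks of this matrix identity gives $\Omega_{J_{2n}}(e_i,Te_j)=\delta_{ij}$ and $\Omega_{J_{2n}}(e_i,e_j)=0$, and the bridge identity converts these in one line into $\langle e_i,e_j\rangle_X=\langle Te_i,Te_j\rangle_X=\delta_{ij}$ and $\langle e_i,Te_j\rangle_X=0$. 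Your inductive argument effectively re-derives the $J_{2n}$-symplecticity of $\tilde A_{2k}$ alongside its $X$-orthonormality; the paper separates those two facts, which makes the printed proof shorter. Either packaging is fine, and your explicit isolation of $J_{2n}T=X$ and of $T$ being an $X$-isometry makes the mechanism more transparent than the paper's one-line verification.
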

\begin{proof}
Let $\tilde A_{2k}=[e_1|\dots,e_k|Te_1|\dots|Te_k]$ be the $J_{2n}$-symplectic basis generated at the $k$th step of the greedy method. Using the fact that $\tilde A_{2k}$ is $J_{2n}$-symplectic, one can check that
\begin{equation} \label{eq:normmor.12}
	\left\langle e_i,e_j\right\rangle_X = \left\langle Te_i,Te_j\right\rangle_X = \Omega_{J_{2n}}(e_i,Te_j)=\delta_{i,j}, \quad i,j=1,\dots,k,	
\end{equation}
and
\begin{equation} \label{eq:normmor.13}
	\left\langle e_i,Te_j \right\rangle_X = \Omega_{J_{2n}}(e_i,e_j) = 0\quad i,j=1,\dots,k,
\end{equation}
where $\delta_{i,j}$ is the Kronecker delta function. This ensures that $\tilde A_{2k}^TX\tilde A_{2k} = I_{2k}$, i.e., $\tilde A_{2k}$ is an ortho-normal basis with respect to the $X$-norm.
\end{proof}
We note that if we take $X=I_{2n}$, then the greedy process generates a $\mathbb J_{2n}$- symplectic basis. With this choice, the greedy method discussed above becomes identical to the greedy process discussed in \cref{sec:mor.3}. Therefore, the symplectic model reduction with a weight matrix $X$ is indeed a generalization of the method discussed in \cref{sec:mor.2}.

We notice that $X^{1/2}$ does not explicitly appear in \cref{eq:normmor.8}. Therefore, it is desirable to compute $A_{2k} = X^{-1/2} \tilde A_{2k}$ without requiring the computation of the matrix square root of $X$. It is easily checked that the matrix $B_{2k}:=X^{1/2} \tilde A_{2k} = XA_{2k}$ is $\mathbb J_{2n}$-symplectic and orthonormal. Reformulation of condition \cref{eq:normmor.9} yields
\begin{equation} \label{eq:normmor.13.1}
	\Omega_{\mathbb J_{2n}}\left( w + B_{2k} \alpha, \bar y \right) = 0, \quad \forall \bar y \in \text{colspan}(B_{2k}),
\end{equation}
where $w = X^{1/2}z$. From \cref{eq:mor.14.1} we know that \cref{eq:normmor.13.1} has the unique solution $\alpha_i = - \Omega_{\mathbb J_{2n}}(z,\mathbb J_{2n}^T \hat e_i)$ for $i\leq k$ and $\alpha_i = \Omega_{\mathbb J_{2n}}(z,\hat e_i)$ for $i>k$, where $\hat e_i$ is the $i$th column vector of $B_{2k}$. Furthermore, we take 
\begin{equation}
	\hat e_{k+1} = \hat z / \| \hat z \|_2, \quad \hat z = w + B_{2k} \alpha,
\end{equation}
as the next enrichment vector to construct
\begin{equation}
	B_{2(k+1)} = [ \hat e_1 | \dots | \hat e_{k+1} | \mathbb J_{2n}^T \hat e_1 | \dots | \mathbb J_{2n}^T \hat e_{k+1} ].
\end{equation}
One can recover $e_{k+1}$ form the relation $e_{k+1} = X^{-1/2} \hat e_{k+1}$. However, since we are interested in the matrix $A_{2(k+1)}$ and not $\tilde A_{2(k+1)}$, we can solve the system $XA_{2(k+1)} = B_{2(k+1)}$ for $A_{2(k+1)}$. This procedure eliminates the computation of $X^{1/2}$.

For identifying the best vectors to be added to a set of basis vectors, we may use similar error functions to those introduced in \cref{sec:mor.3}. The projection error can be used to identify the snapshot that is worst approximated by a given basis $\tilde A_{2k}$:
\begin{equation} \label{eq:normmor.14}
\begin{aligned}
	z_{k+1} &:= \underset{z\in\{ z(t_i)\}_{i=1}^{N}}{\text{argmax } }\| z - P(z) \|_X.
\end{aligned}
\end{equation}
Where $P$ is defined in (\ref{eq:normmor.1}). Alternatively we can use the loss in the Hamiltonian function in (\ref{eq:mor.16}) for parameter dependent problems. We summarize the greedy method for generating a $J_{2n}$-symplectic matrix in \Cref{alg:2}.

\begin{algorithm} 
\caption{The greedy algorithm for generation of a $J_{2n}$-symplectic basis} \label{alg:2}
{\bf Input:} Tolerated projection error $\delta$, initial condition $ z_0$, the snapshots $\mathcal Z = \{Xz(t_i)\}_{i=1}^{N}$, full rank matrix $X=X^T>0$
\begin{enumerate}
\item $z_1 = Xz(0)$
\item $P = A \mathbb J_{2k}^T A^T X \mathbb J_{2n}$
\item $\hat e_1 \leftarrow z_1/ \| z_1 \|_2$
\item $B \leftarrow [\hat e_1| \mathbb J_{2n}^T \hat e_1]$
\item $k \leftarrow 1$
\item \textbf{while} $\| z - Pz\|_X > \delta$ for any $z \in \mathcal Z$
\item \hspace{0.5cm} $z_{k+1} := \underset{z\in \mathcal Z}{\text{argmax }} \| z - Pz \|_X$
\item \hspace{0.5cm} $\mathbb J_{2n}$-orthogonalize $z_{k+1}$ to obtain $\hat e_{k+1}$
\item \hspace{0.5cm} $B \leftarrow [\hat e_1|\dots |\hat e_{k+1} | \mathbb J_{2n}^T \hat e_1|\dots| \mathbb J_{2n}^T  \hat e_{k+1}]$
\item \hspace{0.5cm} $k \leftarrow k+1$
\item \textbf{end while}
\item solve $X A = B$ for $A$
\end{enumerate}
\vspace{0.5cm}
{\bf Output:} The reduced basis $A$
\end{algorithm}
It is shown in \cite{doi:10.1137/17M1111991} that under natural assumptions on the solution manifold of (\ref{eq:mor.8}), the original greedy method for symplectic basis generation converges exponentially fast. We expect the generalized greedy method, equipped with the error function (\ref{eq:normmor.14}), to converge as fast, since the $X$-norm is topologically equivalent to the standard Euclidean norm \cite{friedman1970foundations}, for a full rank matrix $X$.

\subsection{Efficient evaluation of nonlinear terms} \label{sec:normmor.3}
The evaluation of the nonlinear term in (\ref{eq:normmor.8}) still retains a computational complexity proportional to the size of the full order system (\ref{eq:mor.8}). To overcome this, we take an approach similar to \cref{sec:mor.2}. The DEIM approximation of the nonlinear term in (\ref{eq:normmor.8}) yields
\begin{equation} \label{eq:normmor.15}
	\dot y = J_{2k} A^TLAy + \tilde A ^+ X^{1/2} \mathbb J_{2n} U (\mathcal P^TU)^{-1}\mathcal  P^T \nabla_z f(Ay).
\end{equation}
Here $U$ is a basis constructed from the nonlinear snapshots $\{\nabla_z f(z(t_i))\}_{i=1}^N$, and $\mathcal P$ is the interpolating index matrix \cite{Chaturantabut:2010cz}. As discussed in \cref{sec:mor.2}, for a general choice of $U$, the reduced system (\ref{eq:normmor.8}) does not retain a Hamiltonian form. Since $(\tilde A^+ X^{1/2}) A = I_{2k}$ applying the chain rule on (\ref{eq:normmor.15}) yields
\begin{equation} \label{eq:normmor.16}
	\dot y = J_{2k} A^TLAy + \tilde A ^+ X^{1/2} \mathbb J_{2n} U (\mathcal P^TU)^{-1} \mathcal P^T (\tilde A^+ X^{1/2})^T \nabla_y f(Ay).
\end{equation}
Freedom in the choice of the basis $U$ allows us to require $U = X^{1/2} (\tilde A^+)^T$. This reduces the complex expression in (\ref{eq:normmor.16}) to
\begin{equation} \label{eq:normmor.17}
	\dot y = J_{2k} A^TLAy + J_{2k} \nabla_y f(Ay),
\end{equation}
and hence we recover the Hamiltonian structure. The reduced system then yields
\begin{equation} \label{eq:normmor.18}
\left\{
\begin{aligned}
	\dot y(t) &= J_{2k} A^TLAy + J_{2k} (\mathcal P^TX \mathbb J_{2n} X A \mathbb J_{2k})^{-1} \mathcal P^T \nabla_z f(z), \\
	y(0) &= \mathbb J_{2k}^T A^T X J X z_0.
\end{aligned}
\right.
\end{equation}
We now discuss how to ensure that $X^{1/2} (\tilde A^+)^T$ is a basis for the nonlinear snapshots. Note that if $z \in \text{colspan}\left(X^{1/2} (\tilde A^+)^T\right)$ then $X^{-1/2} z \in \text{colspan}\left(( \tilde A^+)^T \right)$. Therefore, it is sufficient to require $(\tilde A^+)^T$ to be a basis for $\{X^{-1/2} \nabla_z f(z(t_i))\}_{i=1}^N$. \Cref{thm:2} suggests that $(\tilde A^+)^T$ is a $J_{2n}^{-1}$-symplectic basis and that the transformation between $\tilde A$ and $(\tilde A^+)^T $ does not affect the symplectic feature of the bases. Consequently, from $A$ we may compute $(\tilde A^+)^T$ and enrich it with snapshots $\{X^{-1/2} \nabla_z f(z(t_i))\}_{i=1}^N$. Once $(\tilde A^+)^T$ represents the nonlinear term with the desired accuracy, we may compute $\tilde A= \left( \left( ( \tilde A^+ )^T \right)^+ \right)^T$ to obtain the reduced basis for (\ref{eq:normmor.18}). \Cref{thm:2} implies that $(\tilde A^+)^T$ is ortho-normal with respect to the $X^{-1}$-norm. This affects the ortho-normalization process. We note that greedy approaches to basis generation do not generally result in a minimal basis.

As discussed in \cref{sec:normmor.2} it is desirable to eliminate the computation of $X^{\pm 1/2}$. Having $z \in \text{colspan}\left(X^{1/2} (\tilde A^+)^T\right)$ implies that $X^{-1} z \in \text{colspan}(\mathbb J_{2n}^T X A \mathbb J_{2n})$. Note that \Cref{alg:2} constructs a $\mathbb J_{2n}$-symplectic matrix $XA$ and $\mathbb J_{2n}^T X A \mathbb J_{2n}$ is the symplectic inverse of $XA$ with respect to the standard symplectic matrix $\mathbb J_{2n}$. Given $e$ as a candidate for enriching $X^{1/2} (\tilde A^+)^T$ we may instead enrich $\mathbb J_{2n}^T X A \mathbb J_{2n}$ with $\hat e$, that solves $X \hat e = e$.

Since  $\mathbb J_{2n}^T X A \mathbb J_{2n}$ is $\mathbb J_{2n}$-symplectic the projection operator onto the column span of $\mathbb J_{2n}^T X A \mathbb J_{2n}$ can be constructed as $Q=\mathbb J_{2n}^T X A \mathbb J_{2n}A^TX$. Given a nonlinear snapshot $z$, we may need to project the vector $X^{-1}z$ onto colspan$(\mathbb J_{2n}^T X A \mathbb J_{2n})$. However, $Q(X^{-1}z)=\mathbb J_{2n}^T X A \mathbb J_{2n}A^Tz$ and thus, the matrix $X^{-1}$ does not appear explicitly. This process eliminates the computation of $X^{\pm 1/2}$. We summarize the process of generating a basis for the nonlinear terms in \Cref{alg:3}.

\begin{algorithm} 
\caption{Generation of a basis for nonlinear terms} \label{alg:3}
{\bf Input:} Tolerated projection error $\delta$, $\mathbb J_{2n}$-symplectic basis $B = X A$ of size $2k$, the snapshots $\mathcal G = \{ \nabla_zf(z(t_i))\}_{i=1}^{N}$, full rank matrix $X=X^T>0$
\begin{enumerate}
\item $Q \leftarrow \mathbb J_{2n}^T X A \mathbb J_{2n}A^T$
\item compute $(B^+)^T = \mathbb J_{2n}^T B \mathbb J_{2n} = [e_1|\dots |e_{k} | \mathbb J_{2n}^Te_1|\dots| \mathbb J_{2n}^Te_{k}]$
\item \textbf{while} $\| g - Q g \|_2 > \delta$ for any $g \in \mathcal G$
\item \hspace{0.5cm} $g_{k+1} := \underset{g\in \mathcal G}{\text{argmax }} \| g -  Q g  \|_2$
\item \hspace{0.5cm} solve $X e = g_{k+1}$ for $e$
\item \hspace{0.5cm} $\mathbb J_{2n}$-orthogonalize $e$ to obtain $e_{k+1}$
\item \hspace{0.5cm} $(B^+)^T \leftarrow [e_1|\dots |e_{k+1} | \mathbb J_{2n}^Te_1|\dots| \mathbb J_{2n}^Te_{k+1}]$
\item \hspace{0.5cm} $k \leftarrow k+1$
\item \textbf{end while}
\item compute $XA = \left( \left (B^+)^T \right)^+ \right)^T$
\end{enumerate}
\vspace{0.5cm}
{\bf Output:} $\mathbb J_{2n}$-symplectic basis $XA$
\end{algorithm}

\subsection{Offline/online decomposition} \label{sec:normmor.4}
Model order reduction becomes particularly useful for parameter dependent problems in multi-query settings. For the purpose the of most efficient computation, it is important to delineate high dimensional ($\mathcal{O}(n^{\alpha})$) offline computations from low dimensional ($\mathcal{O}(k^{\alpha})$) online ones, for some $\alpha \in \mathbb N$. Time intensive high dimensional quantities are computed only once for a given problem in the offline phase and the cheaper low dimensional computations can be performed in the online phase. This segregation or compartmentalization of quantities, according to their computational cost, is referred to as the offline/online decomposition.

More precisely, one can decompose the computations into the following stages:
\emph{Offline stage:} Quantities in this stage are computed only once and then used in the online stage.
\begin{enumerate}
\item Generate the weighted snapshots $\{ X z(t_i) \}_{i=1}^N$ and the snapshots of the nonlinear term $\{\nabla_zf(z(t_i))\}_{i=1}^N$
\item Generate a $J_{2n}$-symplectic basis for the solution snapshots and the snapshots of the nonlinear terms, following \Cref{alg:2,alg:3}, respectively.
\item Assemble the reduced order model \cref{eq:normmor.18}.
\end{enumerate}
\emph{Online stage:} The reduced model \cref{eq:normmor.18} is solved for multiple parameter sets and the output is extracted.

\section{Numerical results} \label{sec:res}
Let us now discuss the performance of the symplectic model reduction with a weighted inner product. In \cref{sec:res.1,sec:res.1.1} we apply the model reduction to equations of a vibrating elastic beam without and with cavity, respectively. And we examine the evaluation of the nonlinear terms in the model reduction of the sine-Gordon equation, in section \cref{sec:res.2}.

\subsection{The elastic beam equation} \label{sec:res.1}
Consider the equations governing small deformations of a clamped elastic body $\Gamma\subset \mathbb R^{3}$ as 
\begin{equation} \label{eq:res.1}
\left\{
\begin{aligned}
	u_{tt}(t,x) &= \nabla \cdot \sigma + f, \quad & x\in \Gamma, \\
	u(0,x) &= \vec 0, & x\in \Gamma,\\
	\sigma \cdot n &= \tau, & x \in \partial \Gamma_\tau,\\
	u(t,x) &= \vec 0, & x \in\partial \Gamma \backslash \partial \Gamma_\tau,
\end{aligned}
\right.
\end{equation}
and
\begin{equation}  \label{eq:res.2}
	\sigma = \lambda (\nabla \cdot u) I + \mu(\nabla u + (\nabla u)^T).
\end{equation}
Here $u:\Gamma \to \mathbb{R}^3$ is the unknown displacement vector field, subscript $t$ denotes derivative with respect to time, $\sigma:\Gamma \to \mathbb{R}^{3\times 3}$ is the stress tensor, $f$ is the body force per unit volume, $\lambda$ and $\mu$ are Lam\'e's elasticity parameters for the material in $\Gamma$, $I$ is the identity tensor, $n$ is the outward unit normal vector at the boundary and $\tau:\partial \Gamma_\tau \to \mathbb R^3$ is the traction at a subset of the boundary $\partial \Gamma_\tau$ \cite{langtangen2017solving}. We refer to \Cref{fig:0}(a) for a snapshot of the elastic beam.

\begin{figure}[t] \label{fig:0}
\begin{tabular}{cc}
\includegraphics[width=0.45\textwidth]{./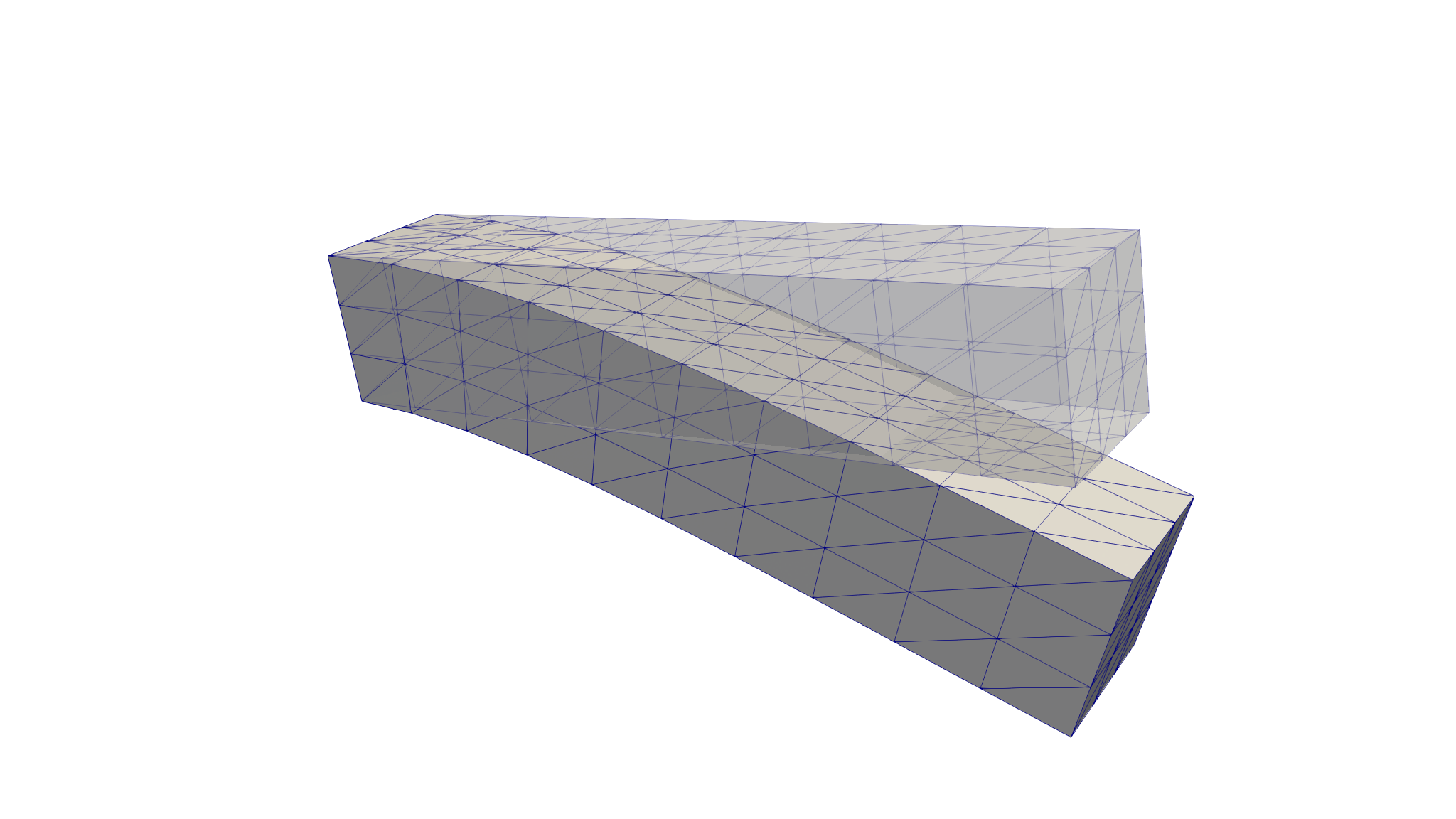} & \includegraphics[width=0.45\textwidth]{./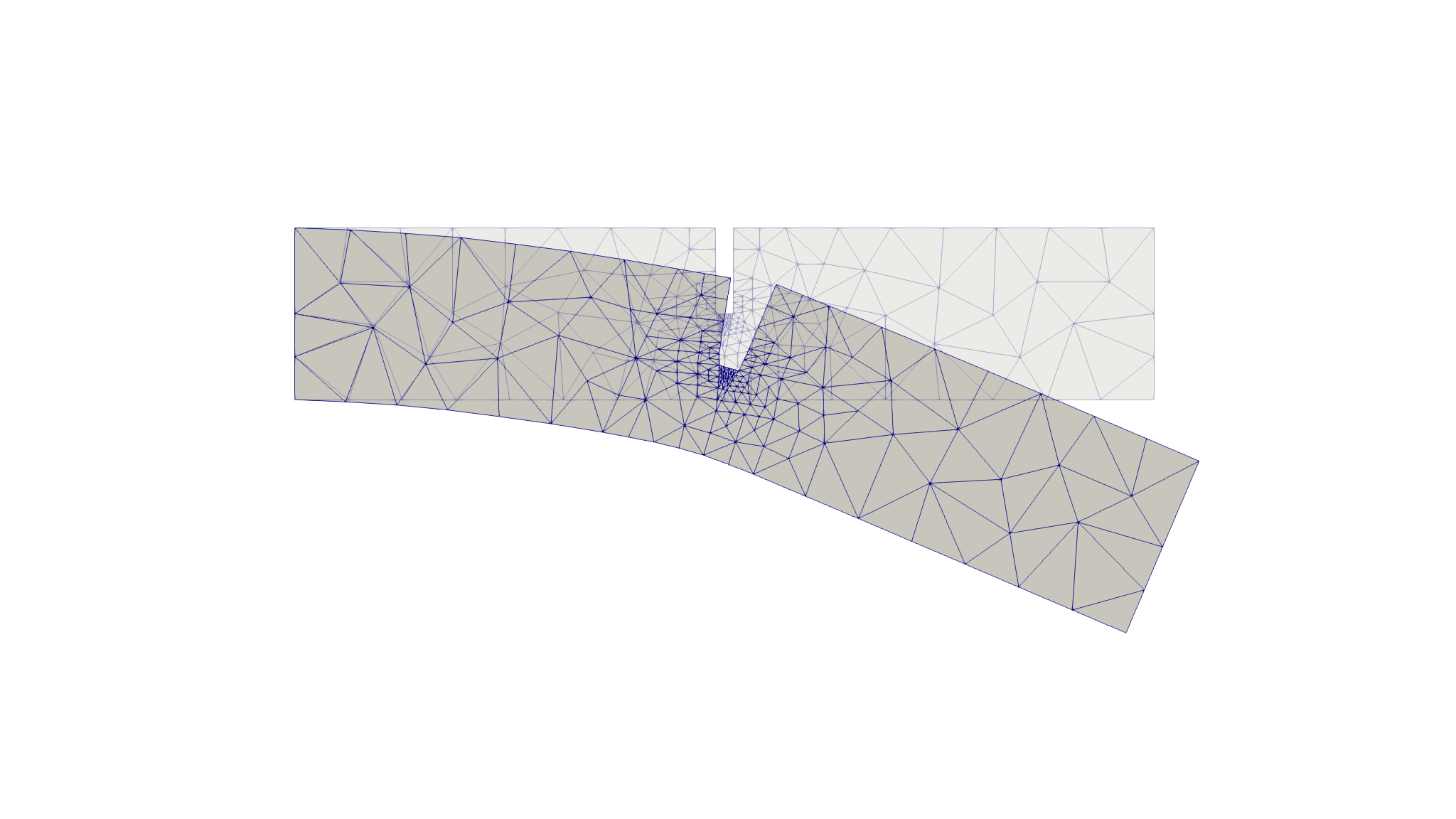} \\
(a) & (b)
\end{tabular}
\caption{(a) initial condition and a snapshot of the 3D beam. (b) initial condition and a snapshot of the 2D beam with cavity. }
\end{figure}

We define a vector valued function space as $V = \{ u \in (L^2(\Gamma))^3 : \| \nabla u_i \|_2 \in L^2 \text{, } i=1,2,3\text{, } u = \vec 0 \text{ on } \partial \Gamma_\tau \}$, equipped with the standard $L^2$ inner product $(\cdot,\cdot):V\times V \to \mathbb R$, and seek the solution to (\ref{eq:res.1}). To derive the weak formulation of (\ref{eq:res.1}), we multiply it with the vector valued test function $v \in V$, integrate over $\Gamma$, and use integration by parts to get 
\begin{equation}  \label{eq:res.4}
	\int_{\Gamma} u_{tt} \cdot v\ dx = - \int_{\Gamma} \sigma : \nabla v \ dx+ \int_{\partial \Gamma_\tau} (\sigma \cdot n) \cdot v\ ds +  \int_{\Gamma} f \cdot v \ dx,
\end{equation}
where $\sigma : \nabla v = \sum_{i,j}\sigma_{ij}(\nabla v)_{ji}$ is the tensor inner product. Note that the skew-symmetric part of $\nabla v$ vanishes over the product $\sigma : \nabla v$, since $\sigma$ is symmetric. By prescribing the boundary conditions to (\ref{eq:res.4}) we recover
\begin{equation} \label{eq:res.5}
	\int_{\Gamma} u_{tt} \cdot v\ dx = - \int_{\Gamma} \sigma : \text{Sym}(\nabla v) \ dx+ \int_{\partial \Gamma_\tau} \tau \cdot v\ ds +  \int_{\Gamma} f \cdot v \ dx,
\end{equation}
with Sym$(\nabla v) = (\nabla v + (\nabla v)^T)/2$. The variational form associated to (\ref{eq:res.1}) is
\begin{equation} \label{eq:res.6}
	(u_{tt},v) = - a(u,v) + b(v), \quad u,v\in V,
\end{equation}
where
\begin{equation} \label{eq:res.7}
\begin{aligned}
	a(u,v) &= \int_{\Gamma} \sigma : \text{Sym}(\nabla v) \ dx, ~
	b(v) &= \int_{\partial \Gamma_\tau} \tau \cdot v\ ds +  \int_{\Gamma} f \cdot v \ dx.
\end{aligned}
\end{equation}
To obtain the FEM discretization of (\ref{eq:res.6}), we triangulate the domain $\Gamma$ and define vector valued piece-wise linear basis functions $\{\phi_i\}_{i=1}^{N_h}$, referred to as the \emph{hat functions}. We define the FEM space $V_h$, an approximation of $V$, as the span of those basis functions. Projecting (\ref{eq:res.6}) onto $V_h$ yields the discretized weak form
\begin{equation} \label{eq:res.8}
	((u_h)_{tt},v_h) = - a(u_h,v_h) + b(v_h),\quad u_h,v_h\in V_h.
\end{equation}
Any particular function $u_h$ can be expressed as $u_h = \sum_{i=1}^{N_h} q_i \phi_i$, where $q_i$, $i=1,\dots,N_h$, are the expansion coefficients. Therefore, by choosing test functions $v_h = \phi_i$, $i=1,\dots,N_h$, we obtain the ODE system
\begin{equation} \label{eq:res.9}
	M\ddot q = -K q + g_{q}.
\end{equation}
where $q=(q_1,\dots,q_{N_h})^T$ are unknowns, the \emph{mass matrix} $M\in \mathbb R^{N_h\times N_h}$ is given as $M_{i,j} = (\phi_i,\phi_j)$, the \emph{stiffness matrix} $K\in \mathbb R^{N_h\times N_h}$ is given as $K_{i,j} = a(\phi_j,\phi_i)$ and $g_q=(b(v_1),\dots,b(v_{N_h}))^T$. Now introduce the canonical coordinate $p = M\dot q$ to recover the Hamiltonian system
\begin{equation} \label{eq:res.10}
	\dot z = \mathbb J_{2N_h} Lz + g_{qp},
\end{equation}
where
\begin{equation} \label{eq:res.11}
	z = 
	\begin{pmatrix}
	q \\
	p	
	\end{pmatrix}, \quad 
	L = 
	\begin{pmatrix}
	K & 0 \\
	0 & M^{-1}
	\end{pmatrix}, \quad
	g_{qp} =
	\begin{pmatrix}
	0 \\
	g_q
	\end{pmatrix},
\end{equation}
together with the Hamiltonian function $H(z) = \frac{1}{2} z^TLz + z^T \mathbb J_{2N_h}^T g_{qp}$. An appropriate FEM setup leads to a symmetric and positive-definite matrix $L$. Hence, it seems natural to take $X=L$, the energy matrix associated to (\ref{eq:res.10}). The system parameters are summarized in the table below. For further information regarding the problem, we refer to \cite{langtangen2017solving}.

\vspace{0.5cm}
\begin{center}
\begin{tabular}{|l|l|}
\hline
Domain shape & box: $l_x = 1,\ l_y = 0.2,\ l_z = 0.2$ \\
Time step-size & $\Delta t = 0.01$ \\
Gravitational force & $f = (0,0,-0.4)^T$ \\
Traction & $\tau = \vec 0$ \\
Lam\'e parameters & $\lambda = 1.25$, $\mu = 1.0$ \\
Degrees of freedom & $2N_{h} = 1650$ \\
\hline
\end{tabular}
\end{center}
\vspace{0.5cm}
Projection operators $P_{X,V}$, $P^{\text{symp}}_{I,A}$ and $P^{\text{symp}}_{X,\tilde A}$ are constructed following \cref{sec:mor.1,sec:mor.2,sec:mor.3}, respectively, with $\sigma = 5\times 10^{-4}, 2\times 10^{-4}$ and $1\times 10^{-4}$. In order to apply a symplectic time integrator, we first compute the transformation $J_{2k} = \mathcal T \mathbb J_{2k} \mathcal T^T$ using the symplectic GS method with complete pivoting. The reduced systems, obtained from $P^{\text{symp}}_{I,A}$ and $P^{\text{symp}}_{X,\tilde A}$, are then integrated in time using the St\"ormer-Verlet scheme to generate the temporal snapshots. The reduced system obtained from $P_{X,V}$ is integrated using a second order implicit Runge-Kutta method. Note that the St\"ormer-Verlet scheme is not used since the canonical form of a Hamiltonian system is destroyed when $P_{X,V}$ is applied.

\begin{figure}[t] \label{fig:1}
\begin{tabular}{cc}
\includegraphics[width=0.45\textwidth]{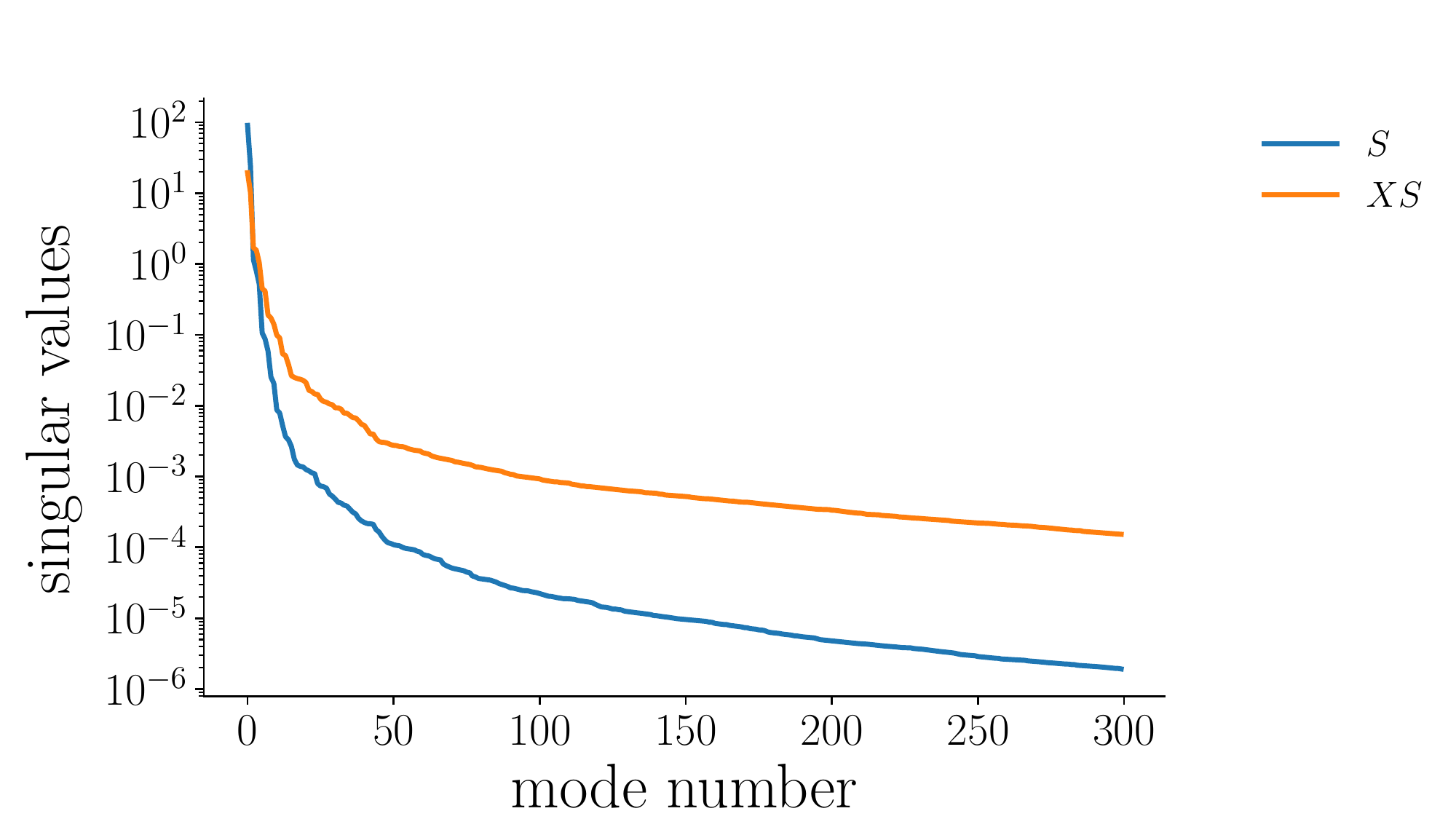} & \includegraphics[width=0.45\textwidth]{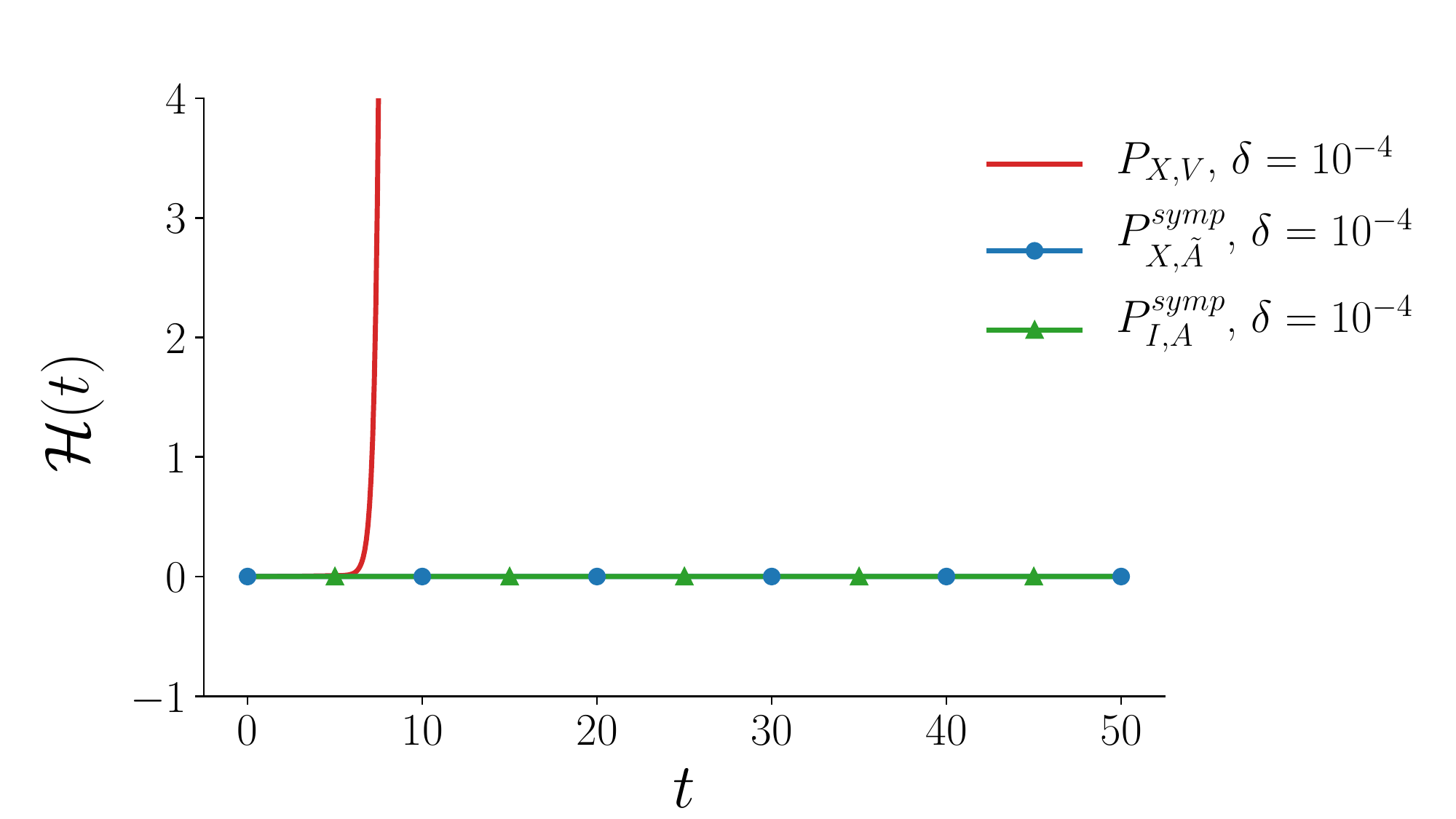} \\
(a) & (b) \\
\includegraphics[width=0.45\textwidth]{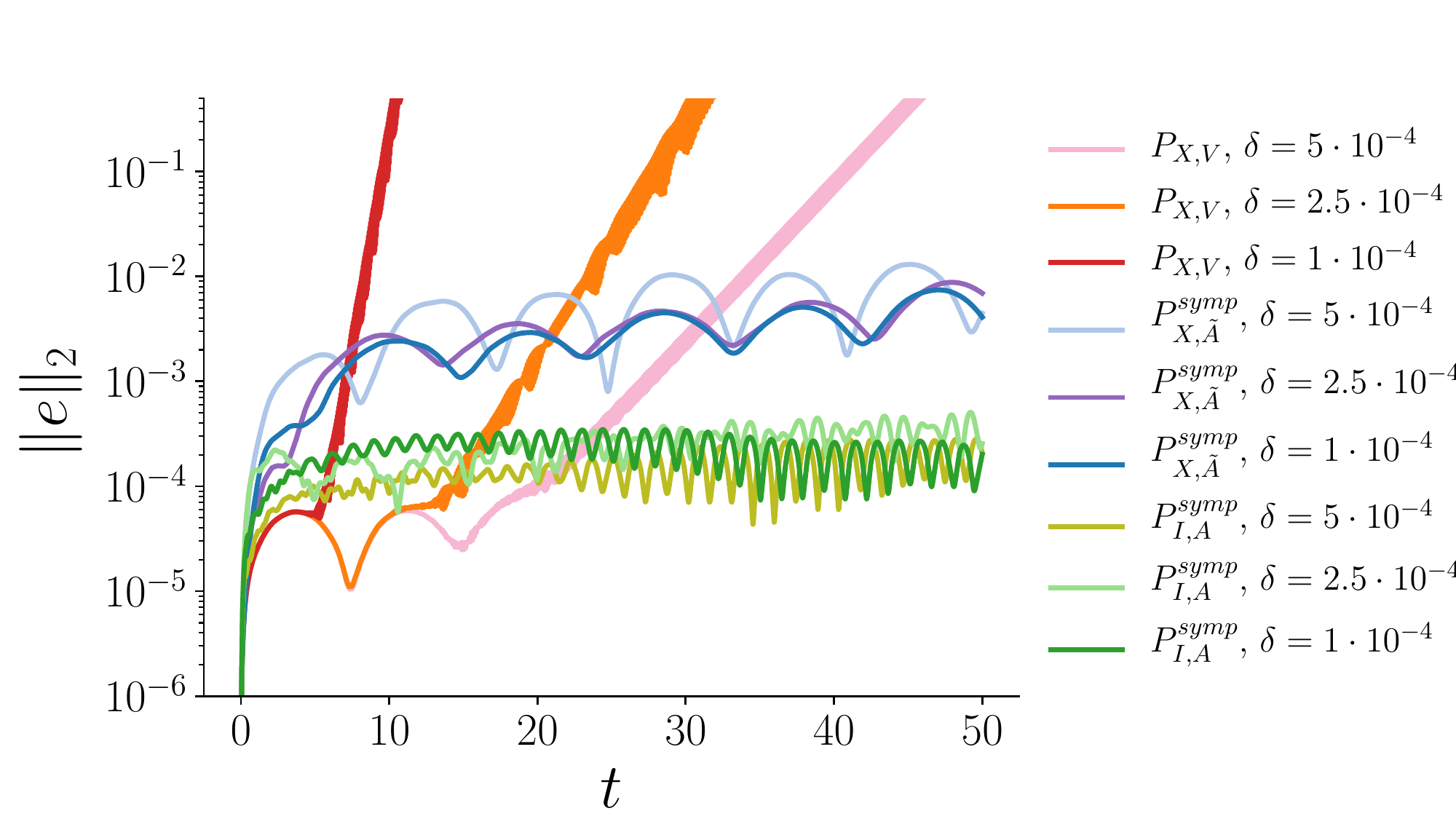} & \includegraphics[width=0.45\textwidth]{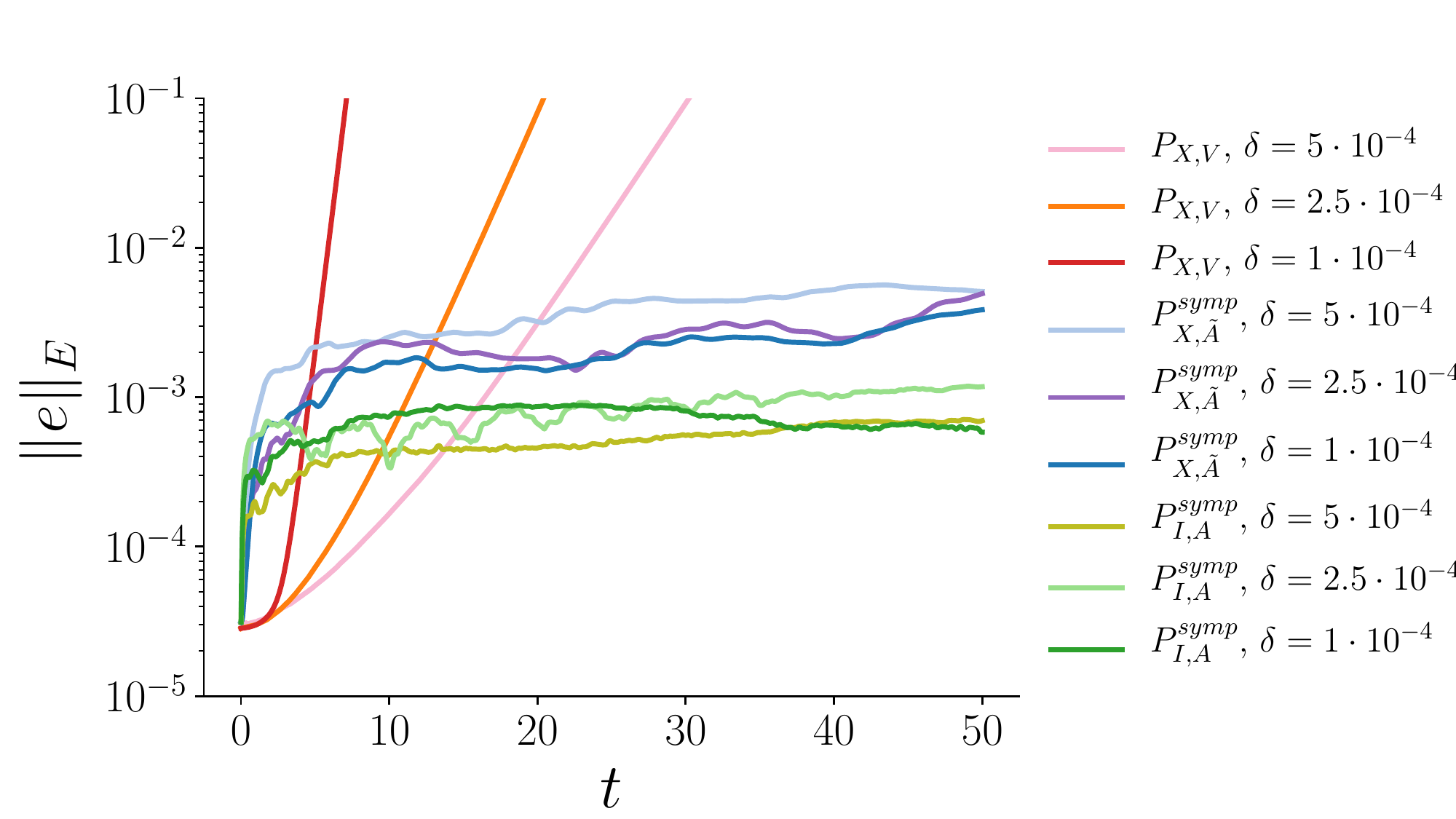} \\
(c) & (d) \\
\end{tabular}
\caption{Numerical results related to the beam equation. (a) the decay of the singular values. (b) conservation of the Hamiltonian. (c) error with respect to the 2-norm. (d) error with respect to the $X$-norm.}
\end{figure}

\Cref{fig:1}(a) shows the decay of the singular values of the temporal snapshots $S$ and $XS$, respectively. The difference in the decay indicates that the reduced systems constructed using $P_{I,A}^{\text{symp}}$ and $P_{X,\tilde A}^{\text{symp}}$ would have different sizes to achieve similar accuracy.

\Cref{fig:1}(b) shows the conservation of the Hamiltonian for the methods discussed above. This confirms that the symplectic methods preserve the Hamiltonian and the system energy. However, the Hamiltonian blows up for the reduced system constructed by the projection $P_{X,V}$.

\Cref{fig:1}(c) shows the $L^2$ error between the projected systems and the full order system, defined as
\begin{equation}
	\| e \|_{L^2} = \sqrt{(e,e)} \approx \sqrt{ (q - \hat q)^T M (q-\hat q) },
\end{equation}
where $e\in V$ is the error function and $\hat q \in \mathbb R^{2n}$ is an approximation for $q$. We notice that the reduced system obtained by the non-symplectic method is unstable and the reduced system, constructed using $P_{X,V}$, is more unstable as $k$ increases. On the other hand, the symplectic methods yield a stable reduced system. Although the system, constructed by the projection $P^{\text{symp}}_{X,\tilde A}$, is not based on the 2-norm projection, the error remains bounded with respect to the 2-norm. 

We define the energy norm $\| \cdot \|_E : V \to \mathbb R$ as
\begin{equation}
	\| (u,\dot u) \|_E = \sqrt{ a(u,u) + (\dot u , \dot u) } \approx \| z \|_X.
\end{equation}
\Cref{fig:1}(d) shows the MOR error with respect to the energy norm. We observe that the classical model reduction method based on the projection $P_{X,V}$ does not yield a stable reduced system. However, the symplectic methods provide a stable reduced system. We observe that the original symplectic approach also provides an accurate solution with respect to the energy norm. Nevertheless, the relation between the two norms depends on the problem set up and the choice of discretization \cite{DEPARIS20094359}.

\subsection{Elastic beam with cavity}  \label{sec:res.1.1}
In this section we investigate the performance of the proposed method on a two dimensional elastic beam that contains a cavity. In this case a nonuniform triangulated mesh is desirable to balance the computational cost of a FEM discretization with the numerical error around the cavity. \Cref{fig:0}(a) shows the nonuniform mesh used in this section.
System parameters are taken to be identical to those in \cref{sec:res.1}. Numerical parameters are summarized in the table below.
\vspace{0.5cm}
\begin{center}
\begin{tabular}{|l|l|}
\hline
cavity width & $l_c = 0.1$ \\
Time step-size & $\Delta t = 4\times 10^{-4}$ \\
Degrees of freedom & $2N_{h} = 744$ \\
\hline
\end{tabular}
\end{center}
\vspace{0.5cm}

\begin{figure} \label{fig:1.1}
\begin{tabular}{cc}
\includegraphics[width=0.45\textwidth]{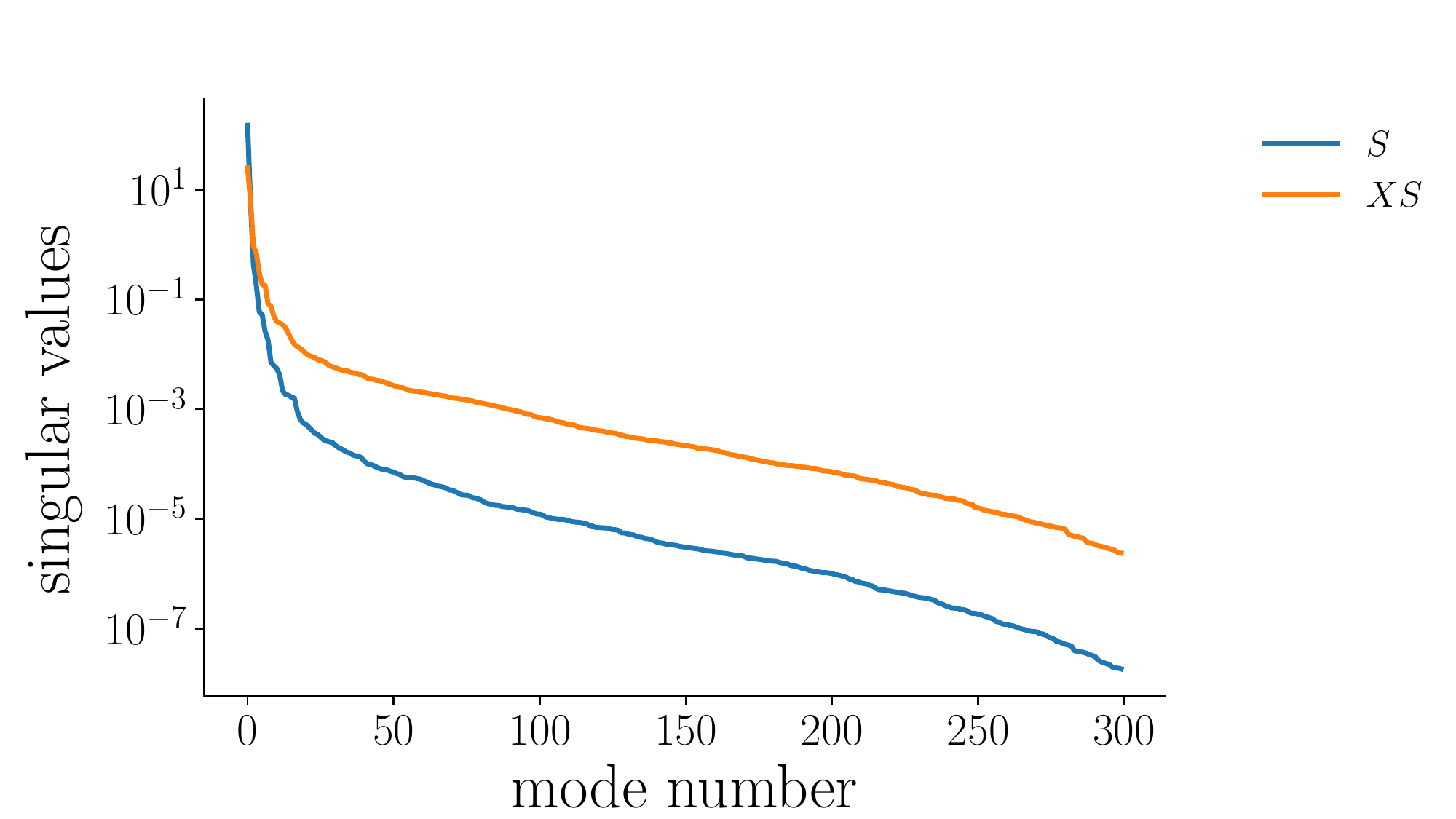} & \includegraphics[width=0.45\textwidth]{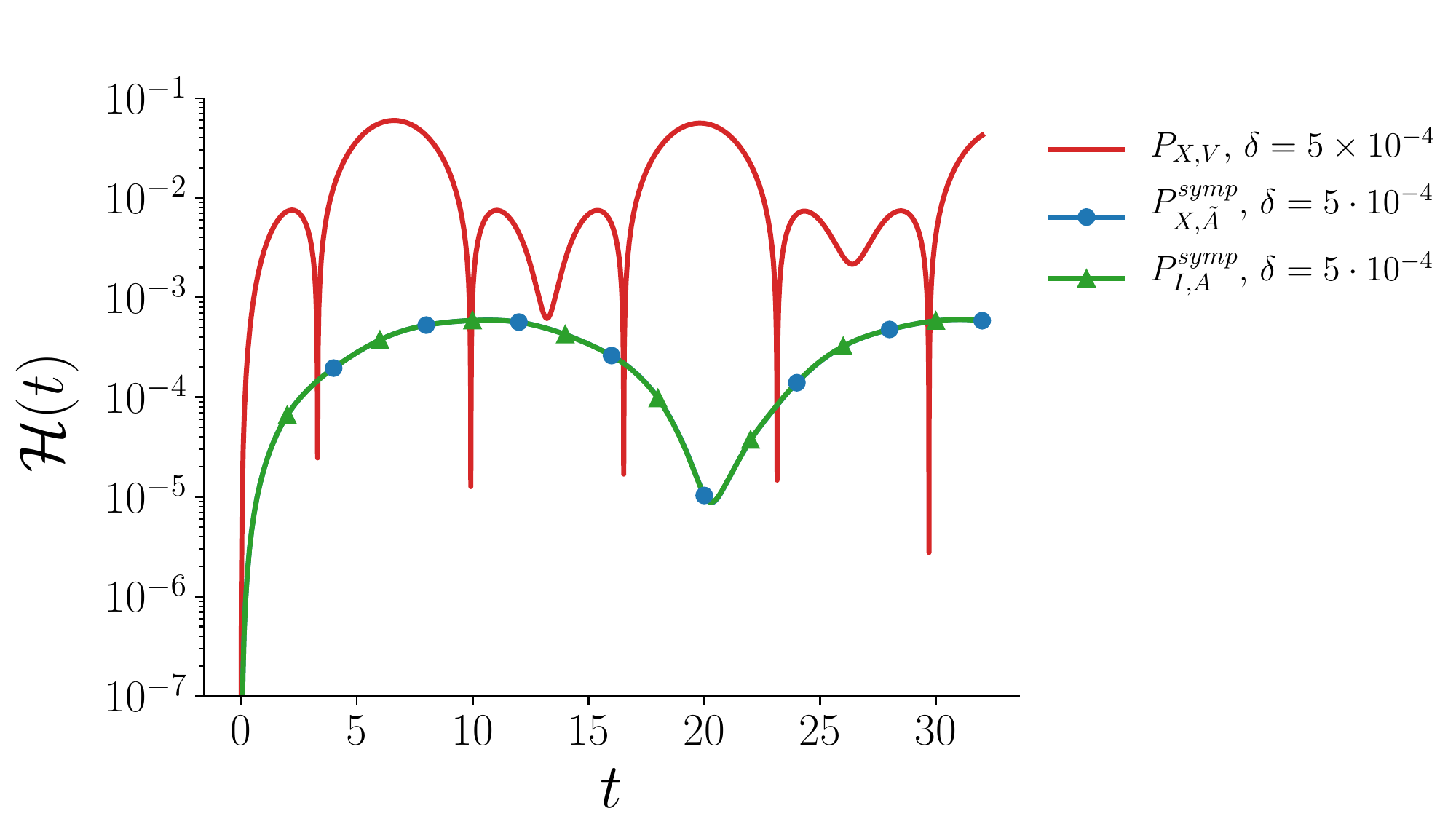} \\
(a) & (b) \\
\includegraphics[width=0.45\textwidth]{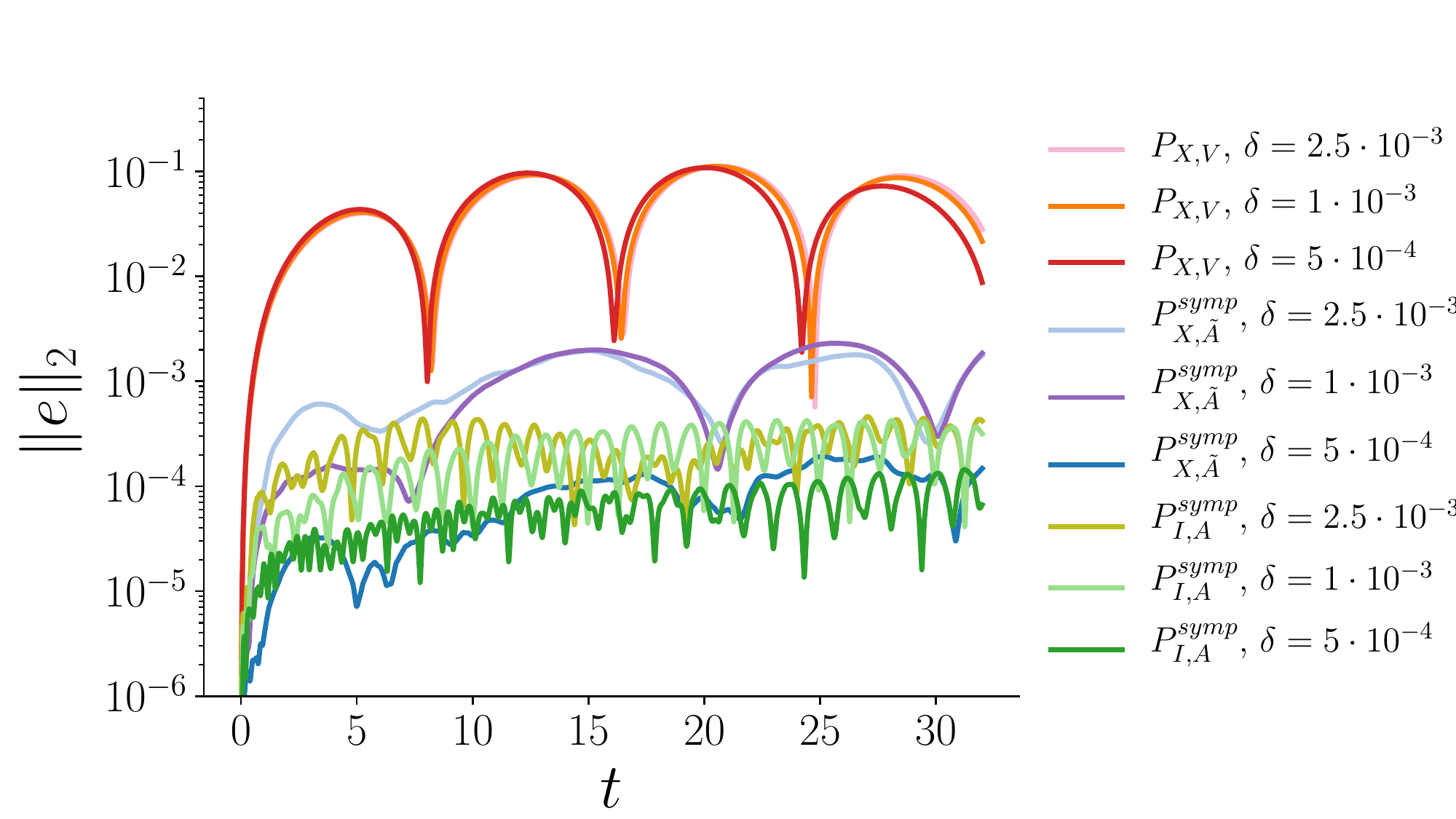} & \includegraphics[width=0.45\textwidth]{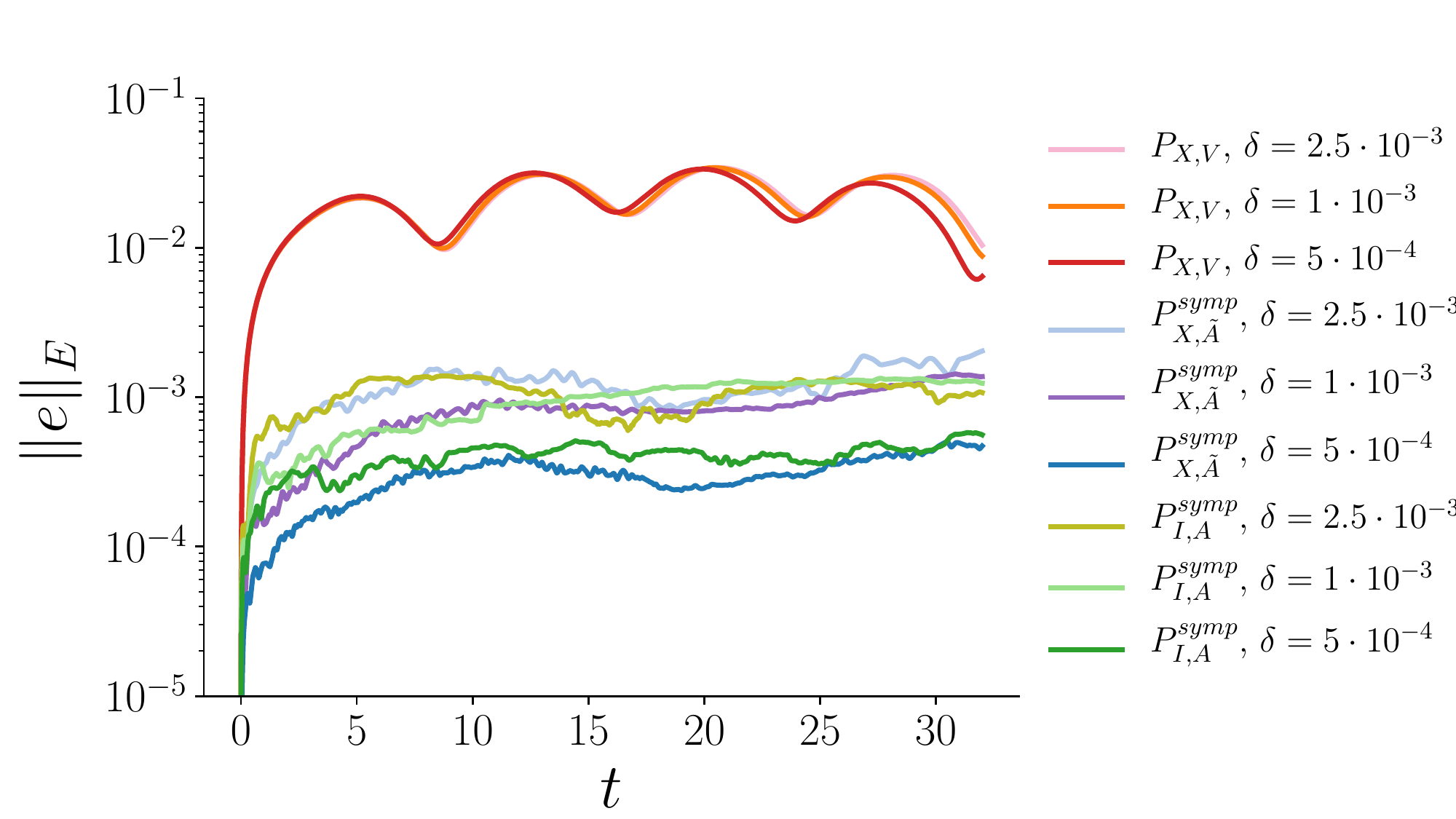} \\
(c) & (d) \\
\end{tabular}
\caption{Numerical results related to the beam with cavity. (a) the decay of the singular values. (b) conservation of the Hamiltonian. (c) error with respect to the 2-norm. (d) error with respect to the energy norm.}
\end{figure}

\Cref{fig:1.1}(a) shows the decay of the singular values for the snapshot matrix $S$ and $XS$. The divergence of the two curves indicates that to obtain the same accuracy in the reduced system, the basis constructed from $S$ and $XS$ would have different sizes.
Projection operators $P_{X,A}$, $P_{I,A}^{\text{symp}}$ and $P_{X,\tilde A}^{\text{symp}}$ are constructed according to the \cref{sec:mor.1,sec:mor.2,sec:mor.3}. The truncation error is set to $\delta = 2.5\times 10^{-3}$, 
$\delta = 1\times 10^{-3}$ and $\delta = 5\times 10^{-4}$ in \cref{alg:1,alg:2}

The 2-norm error and the error in the energy norm are presented in \Cref{fig:1.1}(c) and \Cref{fig:1.1}(d), respectively. We notice that although the non-symplectic method is bounded, it contains larger error compared to the symplectic methods. Moreover, we notice that the error generated by the symplectic methods is consistently reduced under basis enrichment. It is observed that in the energy norm, the projection $P_{X,\tilde A}^{\text{symp}}$ provides a more accurate solution (compare to \Cref{fig:1}). This is because on a nonuniform mesh, the weight matrix $X$ associates higher weights to the elements that are subject to larger error. Therefore, we expect the reduced system constructed with the projection $P_{X,\tilde A}^{\text{symp}}$ to outperform the one constructed with $P_{I,A}^{\text{symp}}$ on a highly nonuniform mesh.

\Cref{fig:1.1}(b) shows the error in the Hamiltonian. Comparing to \Cref{fig:1}, we notice that the energy norm helps with the boundedness of the non-symplectic method. However, the symplectic methods preserves the Hamiltonian at a higher accuracy

\subsection{The sine-Gordon equation} \label{sec:res.2}
The sine-Gordon equation arises in differential geometry and quantum physics \cite{Misumi2015}, as a nonlinear generalization of the linear wave equation of the form
\begin{equation} \label{eq:res.12}
\left\{
\begin{aligned}
	u_{t}(t,x) &= v, \quad x\in \Gamma,\\
	v_t(t,x) &= u_{xx} - \sin(u), \\
	u(t,0) &= 0, \\
	u(t,l) &= 2\pi.
\end{aligned}
\right.
\end{equation}
Here $\Gamma = [0,l]$ is a line segment and $u,v: \Gamma \to \mathbb R$ are scalar functions. The Hamiltonian associated with (\ref{eq:res.12}) is
\begin{equation} \label{eq:res.13}
	H(q,p) = \int_{\Gamma} \frac 1 2 v^2 + \frac 1 2 u_x^2 + 1 - \cos(u) \ dx.
\end{equation}
One can verify that $u_{t} = \delta_v H$ and $v_{t} = - \delta_u H$, where $\delta_v,\delta_u$ are standard variational derivatives. The sine-Gordon equation admits the soliton solution
\begin{equation} \label{eq:res.14}
	u(t,x) = 4 \text{arctan}\left( \exp \left( \pm \frac{x - x_0 - ct}{\sqrt{1-c^2}} \right) \right),
\end{equation}
where $x_0 \in \Gamma$ and the plus and minus signs correspond to the \emph{kink} and the \emph{anti-kink} solutions, respectively. Here $c$, $|c|<1$, is the arbitrary wave speed. We discretize the segment into $n$ equi-distant grid point $x_i = i\Delta x$, $i=1,\dots,n$. Furthermore, we use standard finite-differences schemes to discretize (\ref{eq:res.12}) and obtain
\begin{equation} \label{eq:res.15}
	\dot z = \mathbb J_{2n} L z + \mathbb J_{2n} g(z) + \mathbb J_{2n} c_b.
\end{equation}
Here $z = (q^T,p^T)^T$, $q(t) = (u(t,x_1),\dots,u(t,x_N))^T$, $p(t) = (v(t,x_1),\dots,v(t,x_N))^T$, $c_b$ is the term corresponding to the boundary conditions and
\begin{equation} \label{eq:res.16}
	L = 
	\begin{pmatrix}
		D_x^TD_x & 0_N \\
		0_N & I_n
	\end{pmatrix}, 
	\quad
	g(z) = 
	\begin{pmatrix}
	\sin(q) \\
	\vec 0
	\end{pmatrix},
\end{equation}
where $D_x$ is the standard matrix differentiation operator. We may take $X = L$ as the weight matrix associated to (\ref{eq:res.15}). The discrete Hamiltonian, takes the form
\begin{equation} \label{eq:res.17}
	H_{\Delta x} = \Delta x \cdot \frac 1 2 \| p \|^2_2 + \Delta x \cdot \| D_x q \|^2_2 + \sum_{i=1}^{n} \Delta x \cdot ( 1 - \cos(q_i) ).
\end{equation}
The system parameters are given as
\vspace{0.5cm}
\begin{center}
\begin{tabular}{|l|l|}
\hline
Domain length & $l = 50$ \\
No. grid points & $n = 500$ \\
Time step-size & $\Delta t = 0.01$ \\
Wave speed & $c=0.2$ \\
\hline
\end{tabular}
\end{center}
\vspace{0.5cm}
The midpoint scheme (\ref{eq:hamil.7}) is used to integrate (\ref{eq:res.12}) in time and generate the snapshot matrix $S$. Similar to the previous subsection, projection operators $P_{X,V}$, $P^{\text{symp}}_{I,A}$ and $P^{\text{symp}}_{X,\tilde A}$ are used to construct a reduced system. To accelerate the evaluation of the nonlinear term, the symplectic methods discussed in \cref{sec:mor.1,sec:mor.2} are coupled with the projection operators $P^{\text{symp}}_{I,A}$ and $P^{\text{symp}}_{X,A}$, respectively. Furthermore, the DEIM approximation is used for the efficient evaluation of the reduced system, obtained by the projection $P_{X,V}$. The midpoint rule is also used to integrate the reduced systems in time. \Cref{fig:2} shows the numerical results corresponding to the reduced models without approximating the nonlinearity, while the results corresponding to the accelerated evaluation of the nonlinear term are presented in \Cref{fig:3}.

\begin{figure} \label{fig:2}
\begin{tabular}{cc}
\includegraphics[width=0.45\textwidth]{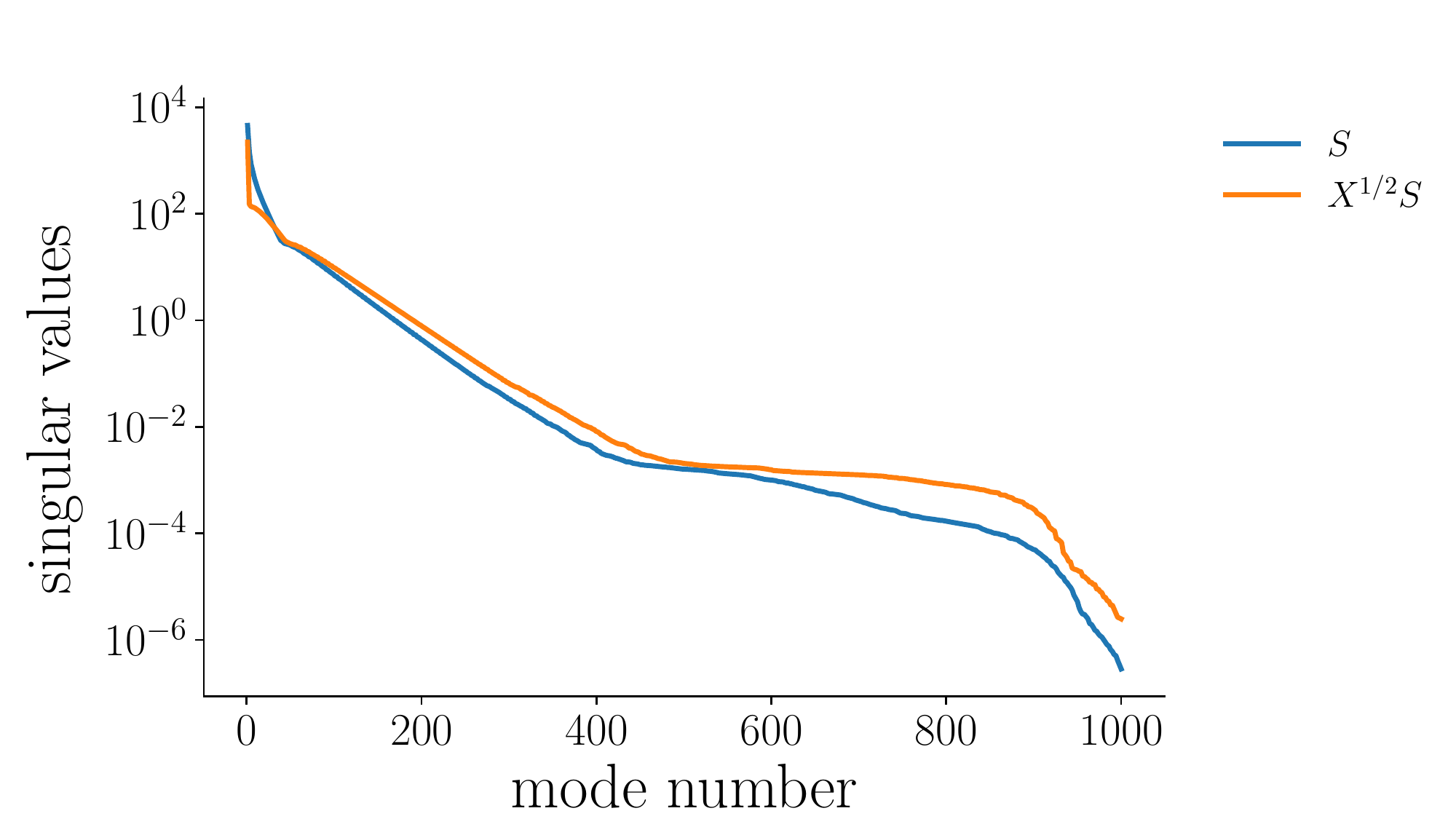} & \includegraphics[width=0.45\textwidth]{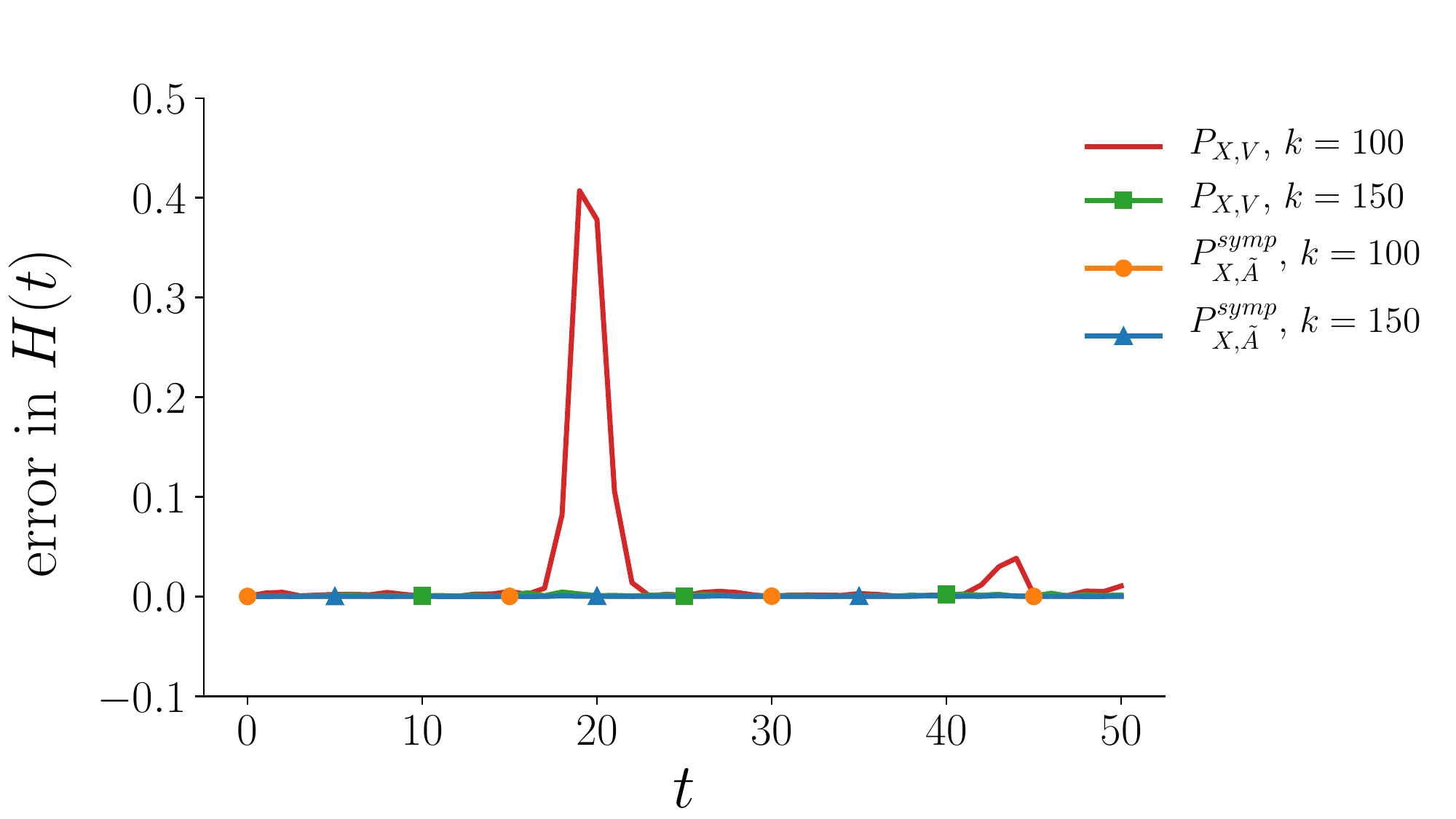} \\
(a) & (b) \\
\includegraphics[width=0.45\textwidth]{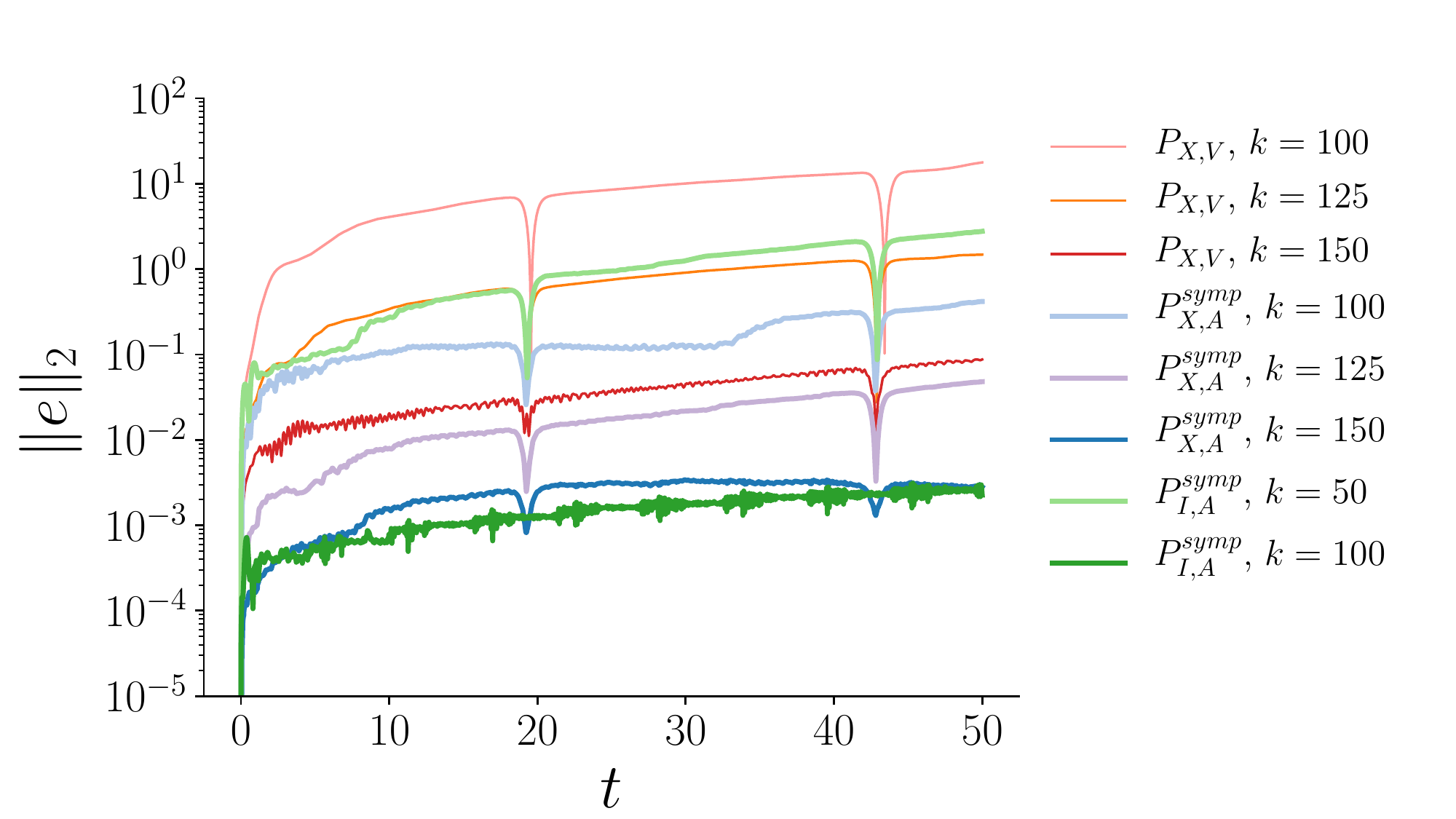} & \includegraphics[width=0.45\textwidth]{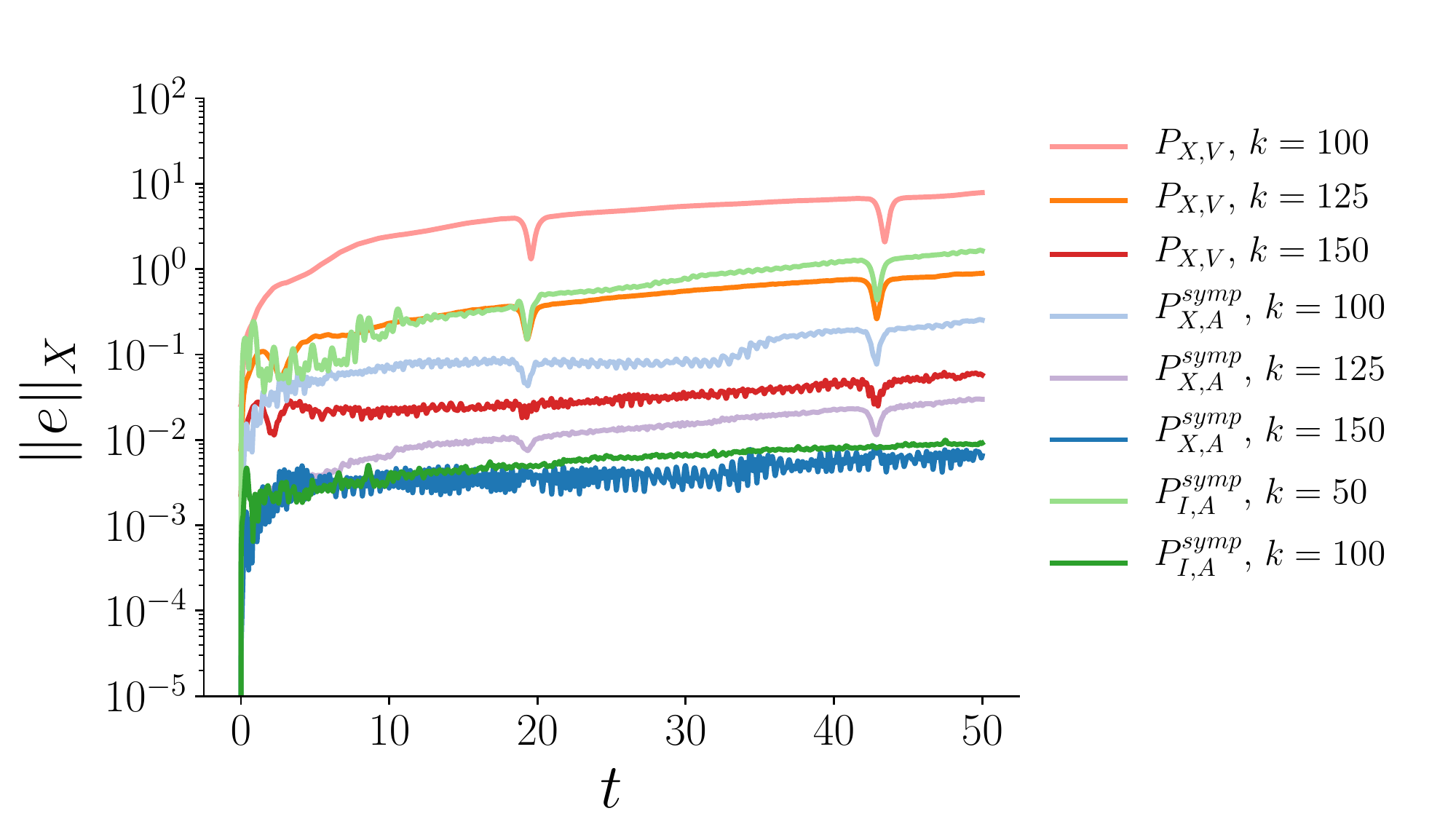} \\
(c) & (d) \\
\end{tabular}
\caption{Numerical results related to the sine-Gordon equation. (a) the decay of the singular values. (b) error in the Hamiltonian. (c) error with respect to the 2-norm. (d) error with respect to the energy norm.}
\end{figure}

\Cref{fig:2}(a) shows the decay of the singular values of matrices $S$ and $XS$. As in the previous section, we observe a saturation in the decay of the singular values of $XS$ compared to the singular values of $S$. This indicates that the reduced basis, based on a weighted inner product, should be chosen to be larger to provide an accuracy similar to based on the Euclidean inner product. Put differently, unweighted reduced bases, when compared to the weighted ones, may be highly inaccurate in reproducing underlying physical properties of the system.

\Cref{fig:2}(b) displays the error in the Hamiltonian. It is observed again that the symplectic approaches conserve the Hamiltonian. However, the classical approaches do not necessarily conserve the Hamiltonian. We point out that using the projection operator $P_{X,V}$ ensures the boundedness of the Hamiltonian. The contrary is observed when we apply the POD with respect to the Euclidean inner-product, i.e. applying the projection operator $P_{I,V}$. This can be seen in the results presented in \cite{doi:10.1137/140978922}, where the unboundedness of the Hamiltonian is observed when $P_{I,V}$ is applied to the sine-Gordon equation. Nevertheless, only the symplectic model reduction consistently preserves the Hamiltonian.

\Cref{fig:2}(c) shows the error with respect to the Euclidean inner-product between the solution of the projected systems and the original system. The behavior of the solution is investigated for $k=100$, $k=125$ and $k=150$. We observe that all systems which are projected with respect to the $X$-norm are bounded. As the results in \cite{doi:10.1137/140978922} suggest, the Euclidean inner-product does not necessarily yield a bounded reduced system. Moreover, we notice that the symplectic projection $P^{\text{symp}}_{X,\tilde A}$ results in a substantially more accurate reduced system compared to the reduced system yielded from $P_{X,V}$. This is because the overall behavior of the original system is translated correctly to the reduced system constructed with the symplectic projection.

The error with respect to the $X$-norm between the solution of the original system and the projected systems is presented in \Cref{fig:2}(d). We see that the behavior of the $X$-norm error is similar to the Euclidean norm, however the growth of the error is slower for methods based on a weighted inner product. Note that the connection between the error in the Euclidean norm and the $X$-norm is problem and discretization dependent. We also observed that symplectic methods are substantially more accurate.

\begin{figure} \label{fig:3}
\begin{tabular}{cc}
\includegraphics[width=0.45\textwidth]{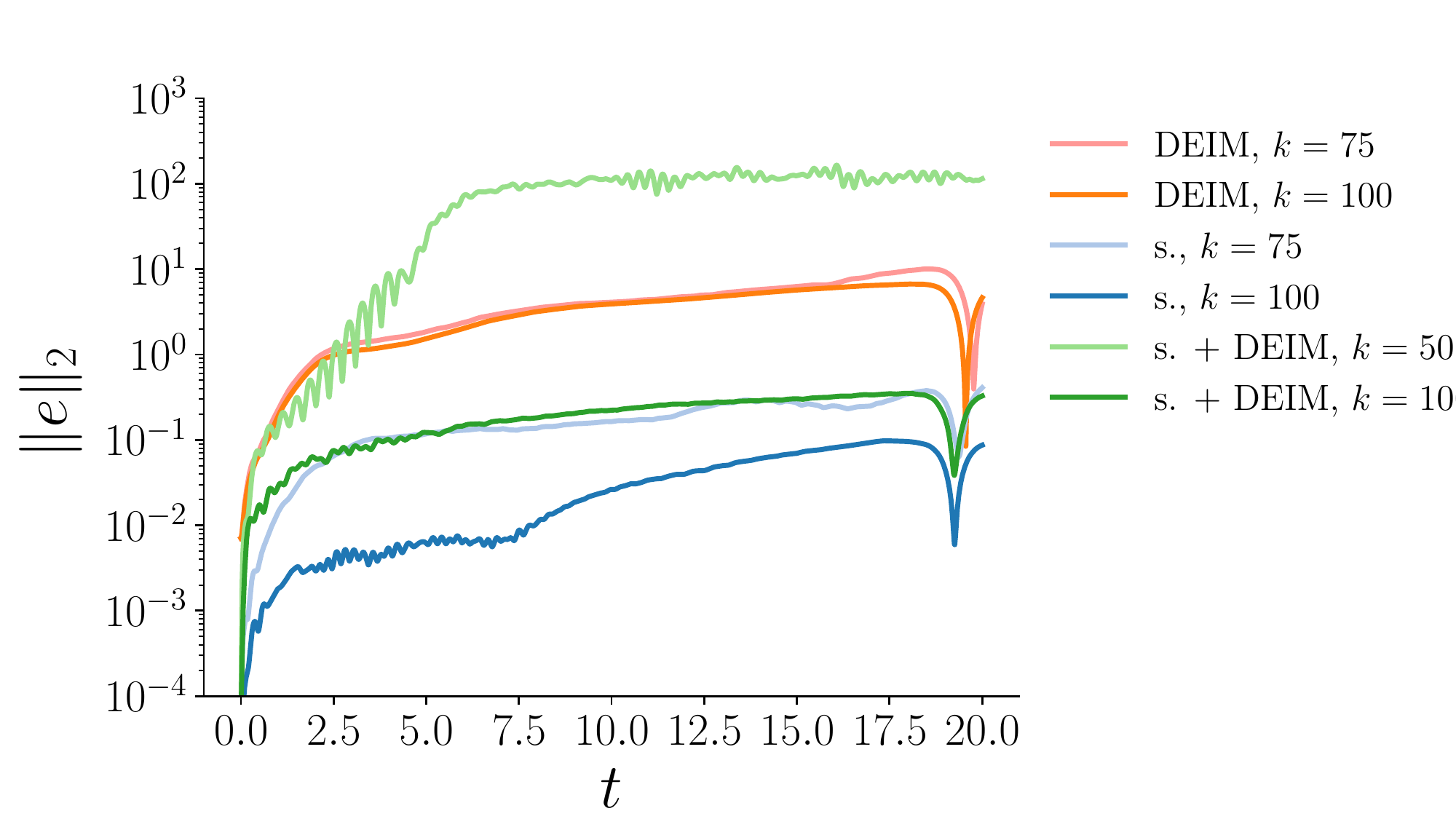} & \includegraphics[width=0.45\textwidth]{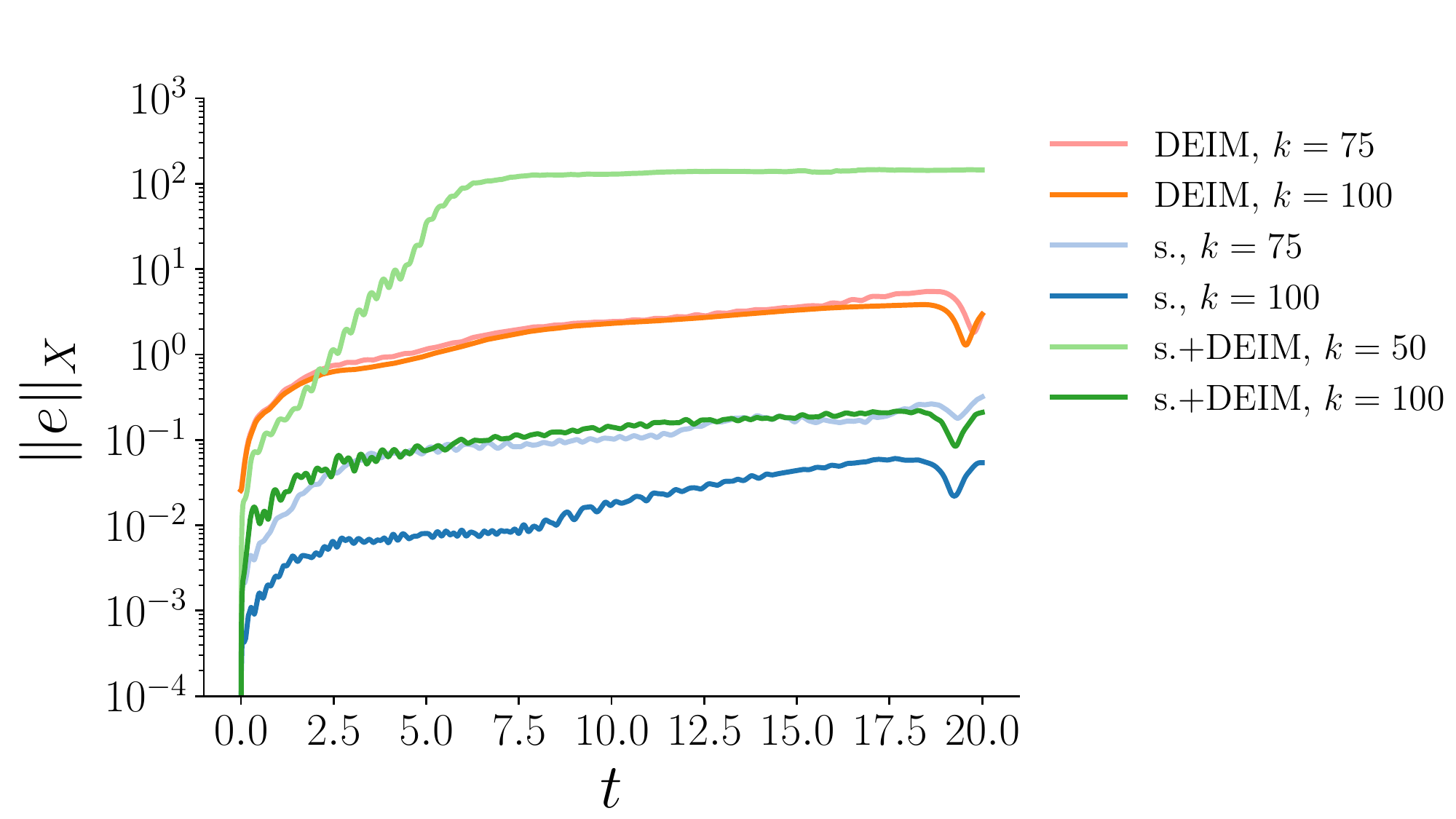} \\
(a) & (b) \\
\multicolumn{2}{c}{
\includegraphics[width=0.45\textwidth]{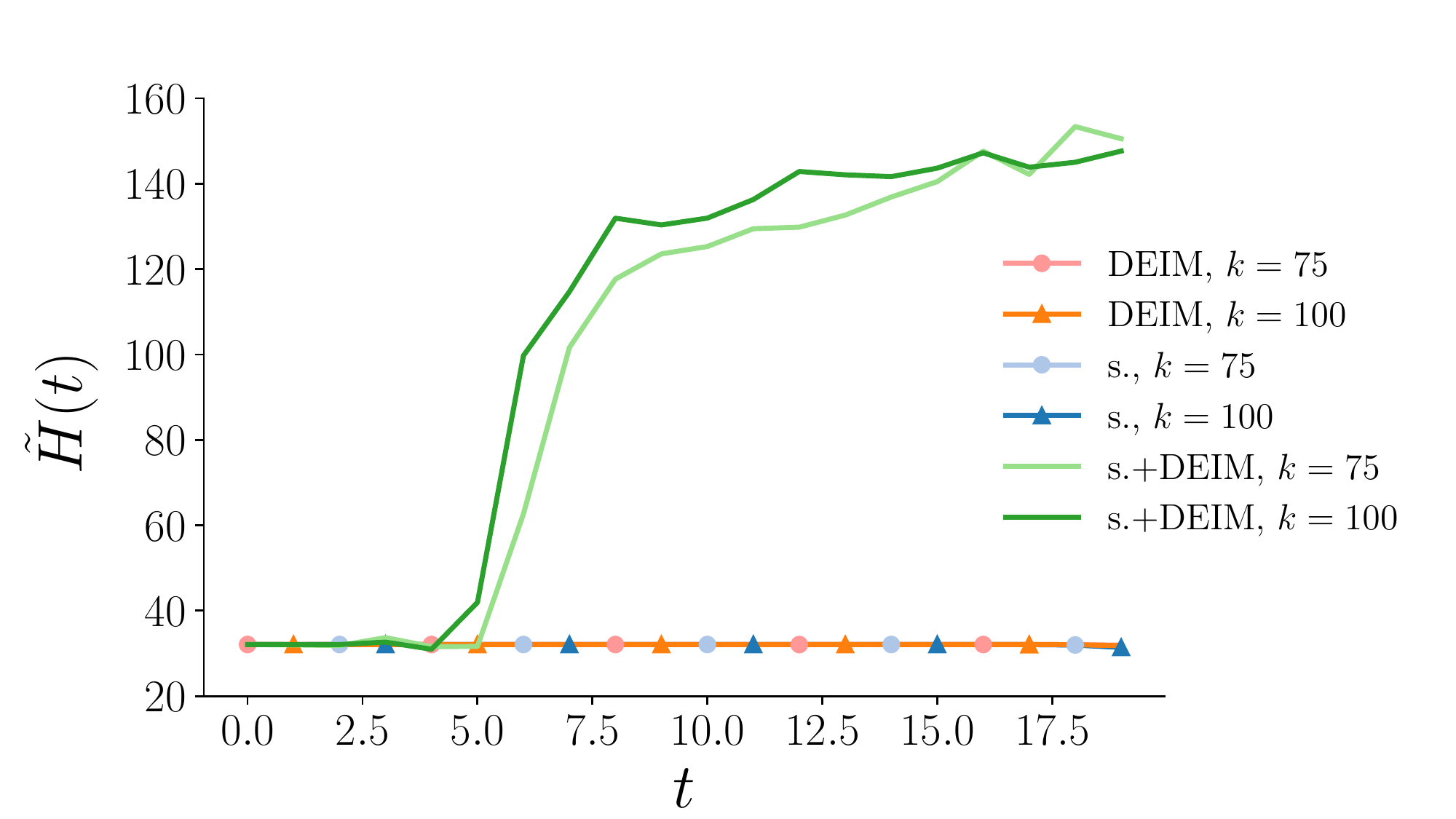}
} \\
\multicolumn{2}{c}{(c)} \\
\end{tabular}
\caption{Numerical results related to the sine-Gordon equation with efficient evaluation of the nonlinear terms. Here, ``DEIM'' indicates classical model reduction with the DEIM, ``s.+DEIM'' indicates symplectic model reduction with the DEIM and ``s.'' indicates symplectic model reduction with symplectic treatment of the nonlinear term. (a) error with respect to the Euclidean norm. (b) error with respect to the $X$-norm. (c) error in the Hamiltonian. }
\end{figure}

\Cref{fig:3} shows the performance of the different model reduction methods, when an efficient method is adopted in evaluating the nonlinear term in (\ref{eq:res.15}). This figure compares the symplectic approaches against non-symplectic methods. For all simulations, the size of the reduced basis for (\ref{eq:res.15}) is chosen to be $k=100$. The size of the basis of the nonlinear term is then taken as $k_n=75$ and $k_n=100$. For symplectic methods, a basis for the nonlinear term is constructed according to \Cref{alg:3}, whereas for non-symplectic methods, the DEIM is applied. Note that for symplectic methods, the basis for the nonlinear term is added to the symplectic basis $A$. This means that the size of the reduced system is larger compared to the classical approach.

\Cref{fig:3}(a) and \Cref{fig:3}(b) show the error with respect to the Euclidean norm and the $X$-norm between the solution of the projected systems compared to the solution of the original system, respectively. We observe that all solutions are bounded and the behavior of the error in the Euclidean norm and the $X$-norm is similar. We observe that enriching the DEIM basis does not increase the overall accuracy of the system projected using $P_{X,V}$. Furthermore, applying the DEIM to a symplectic reduced system also destroys the symplectic nature of the reduced system, as suggested in \cref{sec:normmor.3}. Therefore, it is essential to adopt a symplectic approach to reduce the complexity of the evaluation of the nonlinear terms. We observe that the symplectic method presented in \cref{sec:normmor.3} provides not only an accurate approximation of the nonlinear term, but also preserves the symplectic structure of the reduced system. Moreover, enriching such a basis consistently increases the accuracy of the solution, as suggested in \Cref{fig:3}(a) and \Cref{fig:3}(b).

\Cref{fig:3}(b) shows the conservation of the Hamiltonian for different methods. It is again visible that applying the DEIM to a symplectic reduced system destroys the Hamiltonian structure, therefore the Hamiltonian is not preserved.

\section{Conclusion} \label{sec:conc}
We present a model reduction routine that combines the classic model reduction method, defined with respect to a weighted inner product, with symplectic model reduction. This allows the reduced system to be defined with respect to the norms and inner-products that are natural to the problem and most suitable for the method of discretization. Furthermore, the symplectic nature of the reduced system preserves the Hamiltonian structure of the original system, which results in robustness and enhanced stability in the reduced system.

We demonstrate that including the weighted inner-product in the symplectic model reduction can be viewed as a natural extension of the unweighted symplectic method. Therefore, the stability preserving properties of the symplectic method generalize naturally to the new method.

Numerical results suggest that classic model reduction methods with respect to a weighted inner product can help with the boundedness of the system. However, only the symplectic treatment can consistently increase the accuracy of the reduced system. This is consistent with the fact the symplectic methods preserve the Hamiltonian structure.

We also show that to accelerate the evaluation of the nonlinear terms, adopting a symplectic approach is essential. This allows an accurate reduced model that is consistently enhanced when the basis for the nonlinear term is enriched.

Hence, the symplectic model-reduction with respect to a weighted inner product can provide an accurate and robust reduced system that allows the use of the norms and inner products most appropriate to the problem.

\section*{Acknowledgments} We would like to show our sincere appreciation to Dr. Claudia Maria Colciago for the several brainstorming meetings which helped with the development of the main parts of this article. We would also like to thank Prof. Karen Willcox for hosting Babak Maboudi Afkham at MIT during the composition of this paper.

\bibliographystyle{siamplain}
\bibliography{references}
\end{document}